\renewcommand\theequation{\thesection.\@arabic\c@equation}
\newcommand{\dd}{\,{\rm d}}
\newcommand\R{\mathbb{R}}
\renewcommand\P{{\mathbb{P}}}
\newcommand{\RR}{\R}
\definecolor{NavyBlue}{rgb}{0.00, 0.35, 0.50}
\definecolor{OliveGreen2}{cmyk}{0.64,0,0.95,0.40}
\newtheorem {Theorem}  {Theorem}
\numberwithin{Theorem}{section}
\newtheorem{Lemma}[Theorem]{Lemma}
\newtheorem{Corollary}[Theorem]{Corollary}
\theoremstyle{definition}
\theoremstyle{remark}
\newtheorem{Remark}[Theorem]{Remark}
\newtheorem{Example}[Theorem]{Example}
\begin{document}
	
	\title
	{Non-Algebraic Decay for Solutions to the  Navier--Stokes Equations}
	
	\author[L. Brandolese]{Lorenzo Brandolese}
	\address[L. Brandolese]{Université Claude Bernard Lyon 1, CNRS, Centrale Lyon, INSA Lyon, Université Jean Monnet, ICJ UMR5208,
		69622 Villeurbanne, France.}

	\author[M. Pageard]{Matthieu Pageard}
	\address[M. Pageard]{Université Claude Bernard Lyon 1, CNRS, Centrale Lyon, INSA Lyon, Université Jean Monnet, ICJ UMR5208,
		69622 Villeurbanne, France.}

	\author[C. F. Perusato]{Cilon F. Perusato}
	\address[C. F. Perusato]{Departamento de Matem\'atica. 
		Universidade Federal de Pernambuco, Recife, PE 50740-560. Brazil.}

	\thanks{Partially funded by CNPq grants N. ~ 200124/2024-2 and N.~310444/2022-5, CAPES-COFECUB grant N.~88887.879175/2023-00, ANR-25-CE40-4532 and CNRS program AMI allocations internationales.} 
	
	\keywords{Navier--Stokes equations, Wiegner's theorem, Large-time behavior.}
	
	\subjclass[2000]{76D05 (primary), 35B40, 35Q35 (secondary)}
	
	\date{\today}

	\begin{abstract}
		Around forty years ago, Michael Wiegner provided, in a seminal paper, sharp algebraic decay rates for solutions of the Navier--Stokes equations, showing   that these solutions behave asymptotically like the solutions of the heat equation with the same data as $t\to+\infty$, in the $L^2$-norm, up to some critical decay rate. 
		In the present paper, we close a  gap that appears in the conclusion of Wiegner's theorem in the 2D case, for solutions with non-algebraic decay rate.
	\end{abstract}
	
	\maketitle
	\definecolor{OliveGreen1}{rgb}{0,0.6,0}
	\definecolor{OliveGreen2}{cmyk}{0.64,0,0.95,0.40}

	\section{Introduction}
	This study is motivated by the recent paper~\cite{CPZ}, where it is proved that,
	in the class of Leray solutions of the unforced Navier--Stokes equations, the subclass of solutions with $L^2$-algebraic decay is negligible, in an appropriate topological sense.
	This leads us to study solutions with non-algebraic decay rates.
	
	The Navier--Stokes equations in the whole space $\R^d$ ($d=2,3$) read
	\begin{equation}\label{NS}
		\left\{		\begin{aligned}
			&\partial_t u+u\cdot \nabla u=\Delta u-\nabla p + f, \\
			&\nabla\cdot u=0, \\
			&u(x,0)=u_0(x),
		\end{aligned}
		\qquad x\in\R^d,\;t>0,
		\right.
	\end{equation}
	where $u=(u_1,\ldots,u_d)$ is the velocity field, and $p$ is the (scalar) pressure field.
	In this work, we will always assume $u_0\in L^2_\sigma(\R^d)$, the space
	of solenoidal vector fields with components in $L^2$, and
	that the given external force~$f=f(x,t)$ satisfies 
	$f\in L^{1}((0, \infty), L^{2}(\R^{d}))$.   	
	
	We are interested in the Leray solutions to system \eqref{NS}, namely the functions $u$ verifying
	$u\in C_{w}([0, \infty), L^{2}_{\sigma}(\R^{d})) \cap L^{2}((0,\infty), \dot{H}^{1}(\R^{d})) $
	with 
	$u(\cdot,0)=u_0$,
	satisfying \eqref{NS} in the weak sense, as well as
	the energy estimate {in its strong form}:
	\begin{equation}\label{eqn_energy}
		\|u(t)\|^2+	2\int_{s}^{t} \| \nabla u(\tau)\|^2\dd\tau
		\leq 
		\|u(s)\|^2 + 2\,\int_{s}^t\!\!\int_{\R^{d}}|u(x,\tau)\cdot f(x,\tau)| \dd x  \dd\tau, 
	\end{equation}
	for $ s = 0 $ and almost all $ s > 0 $, and every $ t \geq s $ (here, $\|\cdot\|$ denotes the $L^2$-norm). 
	It is well known (see \cite{Wie87}) that such solutions satisfy,
	for some constant $C>0$,
	\begin{equation}\label{SEI}
		\|u(t)\|^2+	2\int_{s}^{t} \| \nabla u(\tau)\|^2\dd\tau
		\leq 
		\|u(s)\|^2 + C\,\int_{s}^t \|f(\tau)\| \dd\tau,
	\end{equation}
	for $ s = 0 $ and almost all $ s > 0 $, and all $ t \geq s $. 
	When $d=2$, the Leray solutions are unique and inequality~\eqref{SEI} is known to hold for all~$0\le s\le t$.
	
	We now provide a quick overview of the literature concerning the asymptotic behavior and decay rates of the energy $\|u(t)\|^2$ for solutions to the Navier--Stokes equations. This problem dates back to Leray, who in his pioneering paper \cite{Leray1934} left open the question of whether the weak solutions he constructed, in the absence of external forcing, vanish to zero in $L^2$ as $t \to +\infty$. 
	This question was positively answered by Kato \cite{Kato1984} and Masuda \cite{Masuda1984} only fifty years later. Subsequently, Schonbek \cite{Sch85} obtained the first explicit decay rates by using her well-known {\it Fourier-Splitting} technique, under an additional  $L^p$-condition, $1\le p<2$, on the initial data.
	Optimal algebraic decay rates for the $L^2$-norm of Leray solutions were obtained by Wiegner \cite{Wie87}, with or without forcing. Moreover, the conditions required on~$u_0$ to achieve an algebraic decay prescribed in~\cite{Wie87} are also optimal, as later shown in~\cite{Ska}.
	Energy dissipation to zero in the presence of forcing satisfying
	conditions more general than
	$f\in L^1((0,\infty),L^2(\R^d))$
	is obtained in~\cite{ORS97}.
	The large time behavior of these solutions was subsequently extensively investigated in various directions, both for the Navier--Stokes equations and many of their generalizations. We refer to~\cite{BraS} for a detailed review of this topic and to~\cites{Fuji2025,Takeuchi1,Takeuchi2} for a sample of the most recent results.
	
	However, energy dissipation at algebraic rates is far from being the generic scenario; rather, it represents an exceptional situation. 
	To illustrate this fact, let us consider the class $\mathcal{L}$ of all Leray solutions to the unforced Navier--Stokes equations. We can endow $\mathcal{L}$ with the initial topology induced by the map 
	$I\colon \mathcal{L}\to  L^2_\sigma$, where $I$ is the map 
	$ u\mapsto  u_0$, 
	that associates, to a solution $ u$, 
	the corresponding initial datum. From this, in \cite{CPZ} the authors established the following result. 	
	
	\begin{Theorem}[L.\,Brandolese, C.\,Perusato,\, and P.\,Zingano,\,2024\,\cite{CPZ}]
		\label{th:genler}
		With the above topology, $\mathcal{L}$ is a Baire space 
		and the
		set of unforced Leray solutions with algebraic decay 
		is meager in~$\mathcal{L}$.
	\end{Theorem}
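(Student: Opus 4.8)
The plan is to establish the Baire property of $\mathcal{L}$ by transporting that of the Hilbert space $L^2_\sigma$ through the map $I$, and then to realize the set of algebraically decaying solutions as a countable union of nowhere dense sets, each of which is an $I$-preimage of a nowhere dense subset of $L^2_\sigma$. Since $\mathcal{L}$ carries the initial topology of $I$, its open sets are exactly the sets $I^{-1}(V)$ with $V\subseteq L^2_\sigma$ open, and $I$ is onto $L^2_\sigma$ because, by Leray's existence theorem, every $u_0\in L^2_\sigma$ is the datum of some Leray solution. Consequently $I^{-1}(V)$ is dense in $\mathcal{L}$ iff $V$ is dense in $L^2_\sigma$, and $I^{-1}(S)$ has nonempty interior in $\mathcal{L}$ iff $S$ has nonempty interior in $L^2_\sigma$. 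If $O_k=I^{-1}(V_k)$ are dense open subsets of $\mathcal{L}$, the $V_k$ are dense open in $L^2_\sigma$; as $L^2_\sigma$ is a complete metric space, the classical Baire theorem gives that $\bigcap_k V_k$ is dense, hence $\bigcap_k O_k = I^{-1}\!\big(\bigcap_k V_k\big)$ is dense in $\mathcal{L}$. Thus $\mathcal{L}$ is Baire.

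For the meagerness, note first that, by \eqref{eqn_energy} with $s=0$ and $f\equiv0$, one has $\|u(t)\|\le\|u_0\|$ for all $t\ge0$, so a solution that decays algebraically eventually also satisfies $\|u(t)\|\le m(1+t)^{-1/n}$ for all $t\ge0$ with suitable integers $m,n\ge1$. Hence $\mathcal{A}$, the set of algebraically decaying solutions, equals $\bigcup_{m,n\ge1}\mathcal{A}_{m,n}$ with $\mathcal{A}_{m,n}=\{u\in\mathcal{L}:\ \|u(t)\|\le m(1+t)^{-1/n}\ \text{for every }t\ge0\}$. Put $B_{m,n}=I(\mathcal{A}_{m,n})\subseteq L^2_\sigma$. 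Surjectivity of $I$ yields $\overline{I^{-1}(B_{m,n})}=I^{-1}\!\big(\overline{B_{m,n}}\big)$, and since $\mathcal{A}_{m,n}\subseteq I^{-1}(B_{m,n})$, the closure $\overline{\mathcal{A}_{m,n}}$ has empty interior in $\mathcal{L}$ as soon as $\overline{B_{m,n}}$ has empty interior in $L^2_\sigma$. It therefore suffices to prove that each $B_{m,n}$ is closed and has empty interior in $L^2_\sigma$.

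\emph{Closedness of $B_{m,n}$} is the main obstacle. Let $u_0^{(k)}\in B_{m,n}$ with $u_0^{(k)}\to u_0$ in $L^2$, and choose Leray solutions $u^{(k)}$ with $u^{(k)}(0)=u_0^{(k)}$ and $\|u^{(k)}(t)\|\le m(1+t)^{-1/n}$ for all $t$. The energy inequality bounds $(u^{(k)})$ uniformly in $L^\infty((0,\infty),L^2)\cap L^2((0,\infty),\dot H^1)$, so the classical Leray compactness scheme — weak-$*$ and weak limits, the Aubin--Lions--Simon lemma together with the equation to control $\partial_t u^{(k)}$, and a diagonal extraction over compact sets in $(0,\infty)\times\R^d$ — produces a subsequence converging, strongly in $L^2_{\mathrm{loc}}$, to a Leray solution $u$ with $u(0)=u_0$. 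Passing to a further subsequence one has $u^{(k)}(t)\to u(t)$ in $L^2_{\mathrm{loc}}$ for a.e.\ $t$, whence $\|u(t)\|\le\liminf_k\|u^{(k)}(t)\|\le m(1+t)^{-1/n}$ for a.e.\ $t$; since $u\in C_w([0,\infty),L^2)$ and the $L^2$ norm is weakly lower semicontinuous, the bound holds for every $t$. So $u\in\mathcal{A}_{m,n}$ and $u_0\in B_{m,n}$. The technical heart of the proof is exactly this limiting argument: one has to check that the weak limit is again a Leray solution \emph{in the sense used here} — in particular that it obeys the strong energy inequality \eqref{eqn_energy} and attains the datum $u_0$ in $C_w$ — which, though classical, requires care (in dimension three, producing some such limiting solution suffices, so possible non-uniqueness is harmless).

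\emph{Empty interior of $B_{m,n}$.} It is enough to build a dense set $\mathcal{S}\subseteq L^2_\sigma$ disjoint from every $B_{m,n}$. Given $u_0\in L^2_\sigma$ and $\varepsilon>0$, set $\widehat{u_0}$ to zero on a small ball $\{|\xi|\le\rho\}$ to obtain a solenoidal $\tilde u_0$ with $\|\tilde u_0-u_0\|<\varepsilon/2$ and $\|e^{t\Delta}\tilde u_0\|\le e^{-t\rho^2}\|\tilde u_0\|$. Then add a divergence-free bump $b$ with $\widehat b$ supported in $\{|\xi|<\rho\}$, $\|b\|<\varepsilon/2$, and $|\widehat b(\xi)|^2\asymp|\xi|^{-d}\big(\log(1/|\xi|)\big)^{-2}$ near the origin — this profile is $L^2$-integrable, and a direct computation gives $\|e^{t\Delta}b\|^2\gtrsim(\log t)^{-1}$ for large $t$, so $\|e^{t\Delta}b\|$ decays more slowly than any power of $t$. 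As the spectra are disjoint, $v_0:=\tilde u_0+b$ satisfies $\|v_0-u_0\|<\varepsilon$ and $\|e^{t\Delta}v_0\|\ge\|e^{t\Delta}b\|$; let $\mathcal{S}$ be the (dense) collection of all such $v_0$. For $v_0\in\mathcal{S}$ and \emph{any} Leray solution $u$ with $u(0)=v_0$, Wiegner's theorem gives $\|u(t)-e^{t\Delta}v_0\|=O(t^{-1/2})$, so $\|u(t)\|\ge\|e^{t\Delta}v_0\|-O(t^{-1/2})$ again decays more slowly than any power of $t$; in particular $\|u(t)\|>m(1+t)^{-1/n}$ for $t$ large, so $u\notin\mathcal{A}_{m,n}$ and $v_0\notin B_{m,n}$. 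With the previous step, each $B_{m,n}$ is thus closed with empty interior, hence nowhere dense; therefore each $\mathcal{A}_{m,n}$ is nowhere dense in $\mathcal{L}$, $\mathcal{A}$ is meager, and $\mathcal{L}$ is a Baire space.
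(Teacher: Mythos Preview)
The paper does not prove Theorem~\ref{th:genler}: it is quoted from~\cite{CPZ} as motivation, so there is no ``paper's own proof'' to compare against. Your overall architecture --- transporting the Baire property through the surjection $I$, writing $\mathcal{A}=\bigcup_{m,n}\mathcal{A}_{m,n}$, and showing each $B_{m,n}=I(\mathcal{A}_{m,n})$ is closed with empty interior --- is the right one and matches the approach of~\cite{CPZ}.

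There is, however, a genuine gap in your ``empty interior'' step, and it is precisely the gap that the present paper is written to address. You claim that Wiegner's theorem yields $\|u(t)-e^{t\Delta}v_0\|=O(t^{-1/2})$. That rate is wrong: Theorem~\ref{Wiegner_thm} with $\alpha=0$ gives only $\|u(t)-e^{t\Delta}v_0\|=o\bigl((1+t)^{-(d-2)/4}\bigr)$, which is $o(t^{-1/4})$ when $d=3$ and merely $o(1)$ when $d=2$. In three dimensions your argument still goes through after correcting the exponent, since $o(t^{-1/4})$ is negligible against the logarithmic lower bound $\|e^{t\Delta}v_0\|\gtrsim(\log t)^{-1/2}$. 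In two dimensions it collapses: from $\|u(t)-e^{t\Delta}v_0\|=o(1)$ you cannot conclude that $\|u(t)\|$ inherits the slow (logarithmic) decay of $\|e^{t\Delta}v_0\|$, because both quantities are $o(1)$. This is exactly the point made in the introduction of the paper and the reason Theorem~\ref{th:intro} exists.

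The clean repair, which also works uniformly in $d=2,3$ and is how~\cite{CPZ} proceeds, is to argue by contraposition using the \emph{inverse} Wiegner theorem (see~\cite{Ska}, and the corollary following Theorem~\ref{th:intro} in this paper): if some Leray solution $u$ with datum $v_0$ satisfied $\|u(t)\|\le m(1+t)^{-1/n}$, then $\|e^{t\Delta}v_0\|=O((1+t)^{-1/n})$, contradicting your construction of $v_0$ with $\|e^{t\Delta}v_0\|\gtrsim(\log t)^{-1/2}$. This removes the need to estimate $\|u-e^{t\Delta}v_0\|$ at all. The rest of your argument (Baire transport, the Leray compactness passage for closedness of $B_{m,n}$, and the Fourier-side construction of the dense perturbing set) is correct.
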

	
	Motivated by the above result, the purpose of this work is to extend  Wiegner's theorem \cite{Wie87}, to cover a considerably {\it larger class of decay profiles}, encompassing the Leray solutions with non-algebraic decay.
	As we will see, such an extension is especially relevant in the two-dimensional case, as the classical Wiegner theorem provides little
	information in this case. As a byproduct of our analysis, we will be able to  close a gap in the 2D case.
	
	To provide a deeper background for the present study, let us recall the already mentioned Wiegner result~\cite{Wie87}. 
	For the external force, Wiegner's assumptions are the following:
	
	for all $ t > 0 $,
	\begin{equation}\label{eqn_2_f}
		\begin{cases}
			f \in L^{1}((0, \infty), L^{2}(\R^{d})), \\
			\| f(t) \| \lesssim (1 + t)^{-\alpha - 1}, \\
			\| f(t) \|_{L^{d}(\R^{d})} \lesssim t^{-\alpha - (d+2)/4}.
		\end{cases}
	\end{equation}
	We also consider the solution 
	$v(t) \in C([0,\infty),L^2_\sigma(\R^d))$ of the associated Stokes flow, that in the whole space agrees with the solution of the heat equation:
	\begin{equation}\label{stokes-flow}
		\partial_t v = \Delta v + f, \quad v(\cdot,0) = u_0.
	\end{equation}
	
	\begin{Theorem}[M.\,Wiegner,\,1987\,\cite{Wie87}]\label{Wiegner_thm}
		Let $d\ge2$, $u_0\in L^2_\sigma(\R^d)$, and 
		$ f 
		\in L^{1}(( 0, \infty), L^{2}(\R^{d}))$.
		Then, $\|u(t)\|\to0$ as $t\to+\infty$. 
		Moreover, if $f$ satisfies $\eqref{eqn_2_f}$ for some  
		$ 0 \leq \alpha \leq (d+2)/4 $ and if
		\begin{equation}
			\|v(t)\|
			=    O(1 + t)^{-\alpha}, \quad 
			\text{for some $0 \leq \alpha\leq (d+2)/4$}, 
		\end{equation}
		then
		$\|\mbox{$u$}(t)\|
		=O(1 + t)^{-\alpha}$.
		In addition,
		\begin{equation}\label{concl:Wie}
			\|u(t)-v(t)	\|
			=
			\left\{\,
			\begin{array}{ll}
				o(1 + \hspace{+0.020cm} t)^{-(d-2)/4} & 
				\mbox{ if } \alpha = 0, \\
				O(1 +t)^{-2\alpha-(d-2)/4} & 
				\mbox{ if } 0 < \alpha < \frac{1}{2}, \\
				O(1 + t)^{-(d + 2)/4}    
				\log   (2 + t) & 
				\mbox{ if } \alpha = \frac{1}{2}, \\
				O(1 + t)^{-(d + 2)/4} & 
				\mbox{ if } \frac{1}{2} < \alpha \leq \frac{d+2}{4}.
			\end{array}
			\right.
		\end{equation}
	\end{Theorem}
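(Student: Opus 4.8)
\medskip
\noindent\textbf{Plan.} The plan is to combine the Fourier splitting method of Kato, Masuda and Schonbek with a bootstrap on the decay exponent, carried out in three stages: (i)~show $\|u(t)\|\to 0$; (ii)~show that $u$ inherits the algebraic rate of $v$, i.e.\ $\|u(t)\|=O((1+t)^{-\alpha})$; (iii)~deduce \eqref{concl:Wie} for $w:=u-v$. Two preliminary reductions underlie everything. First, I would apply the Leray projector $\P$ to \eqref{NS}; since $f$ is solenoidal this gives $\partial_t u+\P\,\div(u\otimes u)=\Delta u+f$ and hence the Duhamel identity
\begin{equation}\label{eq:pf-duh}
\hat u(\xi,t)=\hat v(\xi,t)-\int_0^t e^{-(t-s)|\xi|^2}\,\widehat{\P\,\div(u\otimes u)}(\xi,s)\dd s ,
\end{equation}
together with the elementary bound $\bigl|\widehat{\P\,\div(u\otimes u)}(\xi,s)\bigr|\lesssim|\xi|\,\|u(s)\|^2$ (from $\|u(s)\otimes u(s)\|_{L^1}=\|u(s)\|^2$ and the boundedness of the symbol of $\P$). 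Second, combining the strong energy inequality \eqref{eqn_energy} with the frequency splitting $\|\nabla u(t)\|^2\ge g(t)^2\bigl(\|u(t)\|^2-\int_{|\xi|\le g(t)}|\hat u(\xi,t)|^2\dd\xi\bigr)$ gives, for every $g(t)>0$ and in the integrated almost-everywhere sense,
\begin{equation}\label{eq:pf-split}
\frac{d}{dt}\|u(t)\|^2+2g(t)^2\|u(t)\|^2\le 2g(t)^2\!\!\int_{|\xi|\le g(t)}\!\!|\hat u(\xi,t)|^2\dd\xi+2\|u(t)\|\,\|f(t)\| ,
\end{equation}
and I would always take $g(t)^2=k/(1+t)$ with $k$ large, so that a power of $(1+t)$ is the integrating factor of the linear part.

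\medskip
\noindent\textbf{Stages (i) and (ii).} For (i) I would first note $\|v(t)\|\to 0$ (dominated convergence in the Duhamel formula for $\hat v$, using $u_0\in L^2$ and $f\in L^1((0,\infty),L^2)$), then insert $|\hat u(\xi,t)|\le|\hat v(\xi,t)|+C|\xi|\int_0^t\|u(s)\|^2\dd s$ into \eqref{eq:pf-split} and use $\int_0^\infty\|\nabla u\|^2<\infty$ (from \eqref{eqn_energy}) to run a standard differential-inequality argument giving $\|u(t)\|\to 0$. For (ii) I would feed in $\|v(t)\|=O((1+t)^{-\alpha})$: since
\[
\int_{|\xi|\le g}|\hat u(\xi,t)|^2\dd\xi\lesssim\|v(t)\|^2+g^{d+2}\Bigl(\int_0^t\|u(s)\|^2\dd s\Bigr)^{\!\!2}\lesssim(1+t)^{-2\alpha}+g^{d+2}\Bigl(\int_0^t\|u(s)\|^2\dd s\Bigr)^{\!\!2}
\]
and $\|f(t)\|\lesssim(1+t)^{-\alpha-1}$, inequality \eqref{eq:pf-split} becomes Gronwall-type, and its linear, heat and forcing parts already give $\|u(t)\|^2=O((1+t)^{-2\alpha})$; the only problematic term is the scale-critical $g^{d+2}\bigl(\int_0^t\|u\|^2\bigr)^2$. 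Because controlling $\int_0^t\|u\|^2$ requires knowing a priori that $\|u\|$ decays, I would close this by induction along a finite chain $\beta_0<\beta_1<\dots<\beta_N=\alpha$: from $\|u(t)\|=O((1+t)^{-\beta_j})$ I would bound $\int_0^t\|u(s)\|^2\dd s$, feed it back into \eqref{eq:pf-split}, and (with $k$ large and the increments $\beta_{j+1}-\beta_j$ admissibly small) gain the next exponent, reaching $\beta=\alpha$ in finitely many steps. In dimension $d\ge 3$ the chain can be started at $\beta_0=\min(\alpha,(d-2)/4)>0$, produced directly from the display above using only that $\|u\|$ is bounded; in $d=2$ this first step is vacuous, which is why the classical theorem is weakest in the plane. (The ceiling $\alpha\le(d+2)/4$ is the decay rate of the self-interaction tail $\nabla(\text{heat kernel})\!*\!\int_0^t u\otimes u$, beyond which it stops being negligible against $v$.)

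\medskip
\noindent\textbf{Stage (iii).} I would subtract \eqref{stokes-flow} from the projected equation, getting $\partial_t w=\Delta w-\P\,\div(u\otimes u)$, $w(0)=0$; testing against $w$, using $\div u=\div w=0$ so the pure-$u$ cubic term vanishes, expanding $u=v+w$, and using the further cancellations $\int v\cdot(v\cdot\nabla v)=0$ and $\int v\cdot(w\cdot\nabla v)=0$ yields
\begin{equation}\label{eq:pf-wen}
\frac{d}{dt}\|w(t)\|^2+2\|\nabla w(t)\|^2=2\int_{\R^d}v\cdot\bigl((v+w)\cdot\nabla w\bigr)\dd x .
\end{equation}
I would bound the right-hand side of \eqref{eq:pf-wen} by Hölder's inequality together with the heat bounds $\|v(t)\|_{L^\infty}\lesssim(1+t)^{-\alpha-d/4}$ and $\|\nabla v(t)\|_{L^2}\lesssim(1+t)^{-\alpha-1/2}$ (here the last two lines of \eqref{eqn_2_f} are used) and Young's inequality, absorbing $\|\nabla w\|^2$ and a lower-order multiple of $\|w\|^2$ into the left side; what remains is $O((1+t)^{-4\alpha-d/2})$. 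Then, running Fourier splitting once more on \eqref{eq:pf-wen} and bounding the low frequencies of $w$ by $|\hat w(\xi,t)|\le C|\xi|\int_0^t\|u(s)\|^2\dd s$ (equation \eqref{eq:pf-duh} with $\hat v$ replaced by $0$), the term $2g^2\int_{|\xi|\le g}|\hat w|^2\lesssim g^{d+4}\bigl(\int_0^t\|u\|^2\bigr)^2$ — with $\int_0^t\|u(s)\|^2\dd s$ now evaluated using $\|u(t)\|=O((1+t)^{-\alpha})$ from Stage~(ii) — integrates, after the Gronwall step, to precisely the four regimes of \eqref{concl:Wie}: the logarithm at $\alpha=\tfrac12$ comes from $\int_0^t(1+s)^{-1}\dd s\sim\log(2+t)$, the cap $O((1+t)^{-(d+2)/4})$ for $\tfrac12<\alpha\le(d+2)/4$ from $\int_0^t\|u\|^2$ being bounded, and the $o((1+t)^{-(d-2)/4})$ at $\alpha=0$ because then only $\|u(t)\|\to0$ and $\|v(t)\|\to0$ are available, with no rate.

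\medskip
\noindent\textbf{Main obstacle.} The hard part will be making the bootstrap of Stage~(ii) close: the nonlinear contribution $g^{d+2}\bigl(\int_0^t\|u\|^2\bigr)^2$ is exactly scale-invariant, so one must track constants, choose $k$ large and the exponent increments small enough that each Gronwall step gains ground without the loss accumulating over the (finitely many) steps, and — most delicately — produce the first bit of algebraic decay of $\|u\|$ to start the induction. It is this initialization that fails to be useful in dimension two, where the endpoint $\alpha=0$ then gives only $\|u(t)-v(t)\|=o(1)$; sharpening exactly this conclusion, for solutions with non-algebraic decay, is what the remainder of the paper is about.
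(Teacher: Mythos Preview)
The paper does not prove Theorem~\ref{Wiegner_thm}. It is stated as a cited background result from Wiegner's 1987 paper~\cite{Wie87}, with no proof given; the paper's own contributions begin with Theorem~\ref{th:intro} and its supporting lemmas. So there is no ``paper's own proof'' to compare your proposal against.

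That said, your outline follows the standard Fourier-splitting/bootstrap strategy of the original reference, and the overall architecture (energy inequality plus low-frequency control from the Duhamel formula, an inductive climb on the decay exponent, then a separate energy estimate for $w=u-v$) is the right one. One point to watch if you actually carry this out: in Stage~(iii) you invoke $\|v(t)\|_{L^\infty}\lesssim(1+t)^{-\alpha-d/4}$ and $\|\nabla v(t)\|_{L^2}\lesssim(1+t)^{-\alpha-1/2}$, but the hypotheses give only $\|v(t)\|_{L^2}=O((1+t)^{-\alpha})$ together with the forcing bounds~\eqref{eqn_2_f}; no integrability or smoothness on $u_0$ beyond $L^2_\sigma$ is assumed, so such pointwise or gradient bounds on $v$ are not available in general. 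Wiegner's original argument avoids this by staying entirely on the Fourier side for the low-frequency part of $w$ (essentially your bound $|\hat w(\xi,t)|\lesssim|\xi|\int_0^t\|u(s)\|^2\dd s$) and handling the energy identity for $w$ without needing $L^\infty$ control of $v$. If you repair that step, the rest of your sketch is in line with the classical proof.
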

	Notice that, when $0\le \alpha<(d+2)/4$ and $\|v(t)\|\approx t^{-\alpha}$,
	we have $\|u(t)-v(t)\|=o(t^{-\alpha})$ as $t\to+\infty$ by~\eqref{concl:Wie}. 
	Thus, Theorem~\ref{Wiegner_thm} \emph{essentially} states that, up to the critical decay rate 
	$t^{-(d+2)/4}$, the solution $u$ behaves asymptotically, in the $L^2$-norm, like the solution~$v$ of the corresponding linear problem, at the first order.
	However, a closer look at the theorem reveals that there is a gap in this interpretation.
	
	Indeed, let us consider the particular case $d=2$, $f\equiv0$ and $\alpha=0$: in this case, Theorem~\ref{Wiegner_thm} asserts that $\|u(t)-v(t)\|=o(1)$, as $t\to+\infty$.
	But for the linear part, it is well known that $\|v(t)\|=\|e^{t\Delta}u_0\|=o(1)$ as $t\to+\infty$, so we cannot conclude in this case that $u(t)$ behaves asymptotically, in the $L^2$-norm, like $v(t)$.
	For example, if $u_0$ is such that $\|e^{t\Delta}u_0\|$ decays at a logarithmic rate, then one might expect that $\|u(t)\|$ itself would decay at the same  logarithmic rate, but such a conclusion is \emph{not} a consequence of Wiegner's theorem in 2D; yet, it will be a consequence of our results, namely Assertion~i) of Theorem~\ref{th:intro} below. 
	
	The above difficulty is specific to the 2D case:
	in three or higher spatial dimensions, 
	the information about the decay of $\|u(t)-v(t)\|$ provided by~\eqref{concl:Wie} is more precise, and the above gap no longer appears. See Remark~\ref{3D1} for additional explanation on this point.
	For this reason, in the present paper we will only focus on the 2D case.

	\subsection*{Statement of the main result.}

	\begin{subequations}
		We collect here the different assumptions 
		on the external force that we will need in this paper.
		The (already mentioned) first condition that will always be required is 
		\begin{equation}\label{assu0:f} 
			f \in L^{1}((0, \infty), L^{2}(\R^{2})).
		\end{equation}   	
		A second assumption, that will be of constant use, is the following:
		\begin{equation}
			\label{assu1:f}
			t\mapsto \sqrt{t}\|f(t)\|\in L^2([T,+\infty)),
			\quad\text{for some $T\ge0$}.
		\end{equation}
		Our last condition, though not systematically required, but useful
		to strengthen our conclusions, is 
		\begin{equation}
			\label{assu2:f}
			\|f(t)\|=O(t^{-1})\Phi(t), \quad\text{as $t\to +\infty$},
		\end{equation}
	\end{subequations}
	for a function~$\Phi$, defined in $[0,+\infty)$, 
	such that $\lim_{t\to+\infty}\Phi(t)=0$. 
	This latter condition is more stringent than the previous one~\eqref{assu1:f}: indeed, if $f$ satisfies both~\eqref{assu0:f} and~\eqref{assu2:f},
	then condition~\eqref{assu1:f} follows by interpolation. Indeed, by ~\eqref{assu2:f} and the fact that $\Phi(t)\to0$, we have
	$t\|f(t)\|^2\lesssim \Phi(t)\|f(t)\|\lesssim \|f(t)\|$ for all large $t$,
	and the integrability condition ~\eqref{assu1:f} follows after integration thanks to ~\eqref{assu0:f}.
	We will prescribe below a few additional 
	conditions on~$\Phi$.

	Our next assumptions concern the large-time decay of $v$ in the $L^2$-norm.
	First of all, it is well known, and easily checked using the Fourier transform and the dominated convergence theorem, that under the sole condition~\eqref{assu0:f},  the solution of the heat equation~\eqref{stokes-flow} with $L^2$-initial data~$u_0$ satisfies
	\[
	\|v(t)\|\to0,\qquad\text{as $t\to+\infty$}.
	\] 
	We will make this fact more precise by \emph{assuming} that
	\begin{equation*}
		\|v(t)\| \lesssim \Phi(t),
	\end{equation*}
	with $\Phi$ decaying to zero
	``without fluctuations'', in a sense formalized by conditions
	~\eqref{Ass-Phi0}-\eqref{Ass-Phi1} below: 
	\begin{subequations}
		\begin{equation}
			\label{Ass-Phi0}
			\text{$\Phi$ is monotonically decreasing on $[0,\infty)$, with
				$\Phi(0+)>0$ and $\lim_{t\to+\infty}\Phi(t)=0$},
		\end{equation}
		and
		\begin{equation}
			\label{Ass-Phi1}
			\exists \kappa\ge1, \;\forall t>0,\quad
			\frac{1}{t}\int_0^t\Phi\le \kappa\Phi(t).
		\end{equation}
		Notice that functions satisfying~\eqref{Ass-Phi0}
		are necessarily locally (square-)integrable on $[0,+\infty)$,
		so the integral $\int_0^t\Phi$ is well-defined. 
		
		In the second assertion of our main result,
		we will require a stronger condition on $\Phi$, namely
		\begin{equation}
			\label{Ass-Phi2}
			\exists \kappa\ge1,  \; \forall t>0,\quad
			\frac{1}{t}\int_0^t\Phi(s)^2\dd s\le \kappa\Phi(t)^2.
		\end{equation}
	\end{subequations}
	By the Cauchy-Schwarz inequality, one easily checks that~\eqref{Ass-Phi2} implies~\eqref{Ass-Phi1}.
	Typical examples of functions satisfying~\eqref{Ass-Phi0}-\eqref{Ass-Phi1}
	are $\Phi(t)=(1+t)^{-a}\ln(e+t)^{-b}$, with $0\le a<1$ and $b\in\R$	(and $b>0$ if $a=0$).
	Typical examples of functions satisfying~\eqref{Ass-Phi0}-\eqref{Ass-Phi2}
	are the same functions as before, but with the smaller range $0\le a<1/2$.
	Slower decaying functions, obtained~\emph{e.g.} by taking negative powers of iterated logarithms, such as $\Phi(t)=(\log(\log(2e+t)))^{-b}$ with $b>0$, are also possible and satisfy the most stringent	version of our conditions.
	Let us also observe that conditions~\eqref{Ass-Phi0}-\eqref{Ass-Phi1} exclude $\Phi\in L^1(\R^+)$ (see~\eqref{infi} below), whereas conditions~\eqref{Ass-Phi0}-\eqref{Ass-Phi2} exclude $\Phi\in L^2(\R^+)$.
	In particular, cases~ii) and iii) in Theorem~\ref{th:intro} below are mutually exclusive.

	\newpage
	
	We now state our main result.
	
	\begin{Theorem}
		\label{th:intro}
		Let  $u_0\in L^2_\sigma(\R^2)$ and $f$ satisfy~
		\eqref{assu0:f}-\eqref{assu1:f}. Let $u$ be the Leray solution
		of the Navier--Stokes equations \eqref{NS} with $d=2$, and $v$ be the solution of the heat equation~\eqref{stokes-flow}, starting from~$u_0$.
		Suppose that
		\[
		\|v(t)\|=O(\Phi(t)),\quad\text{as $t\to+\infty$}.
		\]
		with $\Phi$ satisfying conditions~\eqref{Ass-Phi0}-\eqref{Ass-Phi1}.
		
		Then, the following properties hold:
		
		\begin{enumerate}
			\item[i)]
			We have
			\begin{equation}
				\label{concl-f}
				\|u(t)\|=O(\Phi(t))
				\quad\text{ and }\quad
				\|u(t)-v(t)\|=o(\Phi(t)), \quad\text{as $t \to +\infty$}.	
			\end{equation}
			\item[ii)]
			Under the more stringent condition~\eqref{Ass-Phi2}
			on~$\Phi$ (which excludes $\Phi\in L^2(\R^+))$
			and~\eqref{assu2:f} on~$f$, we have
			\[
			\|u(t)-v(t)\|=O(\Phi(t)^2), \quad\text{as $t \to +\infty$}.	
			\]
			\item[iii)]
			Under the alternative condition $\Phi\in L^2(\R^+)$ (which excludes~\eqref{Ass-Phi2}) and condition~\eqref{assu2:f} on~$f$,
			we have
			\[
			\|u(t)-v(t)\|=O(t^{-1}), \quad\text{as $t \to +\infty$}.	
			\]
		\end{enumerate}
	\end{Theorem}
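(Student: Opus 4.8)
The plan is to exploit the classical Wiegner-type energy/spectral argument, but replacing the algebraic weight $t^{-\alpha}$ by the general profile $\Phi$. Write $w = u - v$. Since $v$ solves the heat equation with the same data and forcing, $w$ solves
\begin{equation*}
\partial_t w = \Delta w - \nabla p - u\cdot\nabla u, \qquad \nabla\cdot w = 0,\qquad w(0)=0 .
\end{equation*}
The first task is to establish the a priori bound $\|u(t)\| = O(\Phi(t))$. For this I would use the strong energy inequality \eqref{SEI} together with the Fourier-splitting device of Schonbek: split the frequency space into $\{|\xi|\le \rho(t)\}$ and its complement, estimate $\|\nabla u\|^2$ from below by $\rho(t)^2(\|u\|^2 - \int_{|\xi|\le\rho(t)}|\hat u|^2)$, and control the low-frequency piece of $\hat u$ by that of $\hat v$ plus the nonlinear Duhamel contribution; in 2D the bilinear term $\widehat{u\otimes u}$ is bounded in $L^\infty_\xi$ by $\|u\|^2$, which feeds back into a Gronwall-type differential inequality for $\|u(t)\|^2$. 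Choosing $\rho(t)^2 \sim 1/t$ (the natural parabolic scaling) and using hypothesis \eqref{Ass-Phi1} — precisely, that the Cesàro average $\frac1t\int_0^t\Phi$ is comparable to $\Phi(t)$ — one closes the loop and obtains $\|u(t)\|\lesssim \Phi(t)$. This is the step I expect to be the main obstacle: in the algebraic case Wiegner can absorb error terms using explicit powers, whereas here one must arrange every nonlinear and forcing remainder to be dominated by $\Phi(t)^2$ (or by $t^{-1}\Phi(t)$, etc.) and then invoke \eqref{Ass-Phi1}/\eqref{Ass-Phi2} at exactly the right moment; getting the bookkeeping of constants $\kappa$ to survive the iteration is the delicate point.

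Next, for the difference estimate in assertion~i), I would run the same Fourier-splitting argument on $w=u-v$ itself. From the Duhamel formula $w(t) = -\int_0^t e^{(t-s)\Delta}\,\mathbb{P}\,\nabla\cdot(u\otimes u)(s)\dd s$, I split frequencies at $\rho(t)^2\sim 1/t$. The high-frequency part is handled by the energy dissipation $\int_0^\infty\|\nabla u\|^2<\infty$ and $\int_0^\infty\|f\|<\infty$, which forces it to be $o(\Phi(t))$. For the low-frequency part, using $|\widehat{u\otimes u}(\xi,s)|\lesssim \|u(s)\|^2 \lesssim \Phi(s)^2$ and $|\xi|\lesssim t^{-1/2}$, the integrand is $\lesssim t^{-1/2}\int_0^t\Phi(s)^2\dd s$; to turn this into $o(\Phi(t))$ one splits $\int_0^t = \int_0^{t/2}+\int_{t/2}^t$, bounds the tail by $(t/2)\sup_{[t/2,t]}\Phi^2 \lesssim t\,\Phi(t/2)^2$ and uses monotonicity plus \eqref{Ass-Phi1} to compare $\Phi(t/2)$ with $\Phi(t)$, while on $[0,t/2]$ the extra decay from the heat kernel (one can afford $|\xi|^2(t-s)\gtrsim 1$ there) gives the gain. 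The key qualitative input making the bound $o(\Phi)$ rather than $O(\Phi)$ is that $\Phi(t)\to 0$, so $\sup_{s\ge t/2}\Phi(s)\to 0$.

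Finally, assertions~ii) and~iii) are refinements of the same scheme where one no longer throws away the quadratic nature of the estimate. Under \eqref{Ass-Phi2} and \eqref{assu2:f}, the low-frequency Duhamel term is genuinely $\lesssim t^{-1/2}\bigl(\int_0^t\Phi(s)^2\dd s\bigr) \lesssim t^{-1/2}\cdot t\,\Phi(t)^2 = t^{1/2}\Phi(t)^2$; combined with the extra $t^{-1/2}$ coming from the high-frequency cutoff (or, more carefully, from integrating $\rho(t)\,e^{-c\rho^2(t-s)}$ against the quadratic bound) this yields $\|w(t)\|=O(\Phi(t)^2)$, and one must also check that the forcing error $\int_0^t e^{(t-s)\Delta}\nabla\cdot(\dots)$-type contributions from \eqref{assu2:f}, of size $\int_0^t\|f\|\lesssim\int_0^t s^{-1}\Phi(s)\dd s$ suitably weighted, are likewise $O(\Phi(t)^2)$ — here \eqref{assu2:f} is exactly what is needed. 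For iii), when $\Phi\in L^2(\R^+)$, the integral $\int_0^t\Phi(s)^2\dd s$ converges, so the same low-frequency computation gives $O(t^{-1/2}\cdot t^{-1/2})=O(t^{-1})$ after the parabolic scaling, and one checks the forcing term $\int_0^t s^{-1}\Phi(s)\dd s$ is $O(1)$ by Cauchy--Schwarz, leaving the $t^{-1}$ rate. Throughout, I would rely on the 2D uniqueness and the validity of \eqref{SEI} for all $0\le s\le t$ to make the energy manipulations rigorous; the 2D restriction is also what lets us use $\|u\otimes u\|_{L^1}\lesssim\|u\|^2$ without any additional higher-integrability of $u$.
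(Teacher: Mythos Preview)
Your overall strategy—Fourier-splitting à la Schonbek--Wiegner with cutoff $\rho(t)^2\sim 1/t$ and a Gronwall closure—is a plausible heuristic, but it is \emph{not} the route the paper takes, and as written it has real gaps.

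\textbf{How the paper actually argues.} The paper never runs Fourier-splitting on $u$ or on $w=u-v$. Instead it works directly with the physical-space Duhamel formula
\[
u(t)=v(t)-\int_0^t e^{(t-s)\Delta}\P\nabla\cdot(u\otimes u)(s)\dd s,
\]
splits the time integral at $t/2$, and estimates the two pieces via the heat-kernel $L^1\to L^2$ smoothing: the piece on $[0,t/2]$ contributes $\lesssim t^{-1}\!\int_0^{t/2}\|u(s)\|^2\dd s$, while the piece on $[t/2,t]$ is written as $\int_{t/2}^t(t-s)^{-1/2}\|u(s)\|\,\|\nabla u(s)\|\dd s$. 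The near-diagonal region is therefore controlled \emph{through a gradient bound}, and the paper devotes two preliminary lemmas to this: using the 2D vorticity equation it first proves $\|\nabla u(t)\|=o(t^{-1/2})$ under~\eqref{assu0:f}--\eqref{assu1:f}, and later (for ii)--iii)) the sharper $\|\nabla u(t)\|\lesssim t^{-1/2}\Phi(t)$ under~\eqref{assu2:f}. The bound $\|u(t)\|=O(\Phi(t))$ is then closed by a continuity/bootstrap argument on $E_{\max}(t):=\sup_{s\le t}\|u(s)\|/\Phi(s)$: one uses that $\|u(s)\|\to 0$ (from Wiegner) to make the coefficient in front of $E_{\max}$ strictly less than~$1$, and condition~\eqref{Ass-Phi1} enters exactly once, to convert $t^{-1}\!\int_0^t\Phi$ into $\kappa\Phi(t)$. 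The $o(\Phi)$ estimate for $u-v$ then follows by reinserting $\|u\|\lesssim\Phi$ into the same Duhamel bound and a short calculus argument (choosing $\lambda(t)=t\Phi(t)$ and using $t\Phi(t)\to\infty$). Parts ii)--iii) are immediate once the improved gradient bound is available.

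\textbf{Where your sketch is incomplete.} First, your proposed Gronwall closure for $\|u(t)\|=O(\Phi(t))$ is precisely the step you flag as ``the main obstacle,'' and you do not indicate how to carry it out: integrating a Fourier-splitting differential inequality with the nonlinear term $\rho^4\bigl(\int_0^t\|u\|^2\bigr)^2$ does not obviously close under~\eqref{Ass-Phi1} alone, because that hypothesis controls $\int_0^t\Phi$, not $\int_0^t\Phi^2$. The paper sidesteps this entirely by using the Duhamel inequality (no time integration of an ODE) together with the smallness of $\|u(s)\|$ for large~$s$ to absorb the nonlinear piece. Second, your treatment of the near-diagonal region never invokes any gradient control, yet in the Duhamel picture $(t-s)^{-1}\|u(s)\|^2$ is non-integrable at $s=t$; the paper's use of $\|\nabla u\|$ (obtained from the vorticity equation, which is the genuinely 2D ingredient) is what makes this work. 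Third, in your argument for ii) the ``extra $t^{-1/2}$ coming from the high-frequency cutoff'' is asserted but not derived; in the paper this factor comes transparently from the refined gradient estimate $\|\nabla u(t)\|\lesssim t^{-1/2}\Phi(t)$, and it is not clear your scheme produces it. In short: the missing idea is the pair of gradient lemmas via the vorticity equation, which replace all the delicate frequency-space bookkeeping you anticipate.
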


	As an illustration of our result,
	let us consider $u_0$ and $f$ as in Example~\ref{Ex:1} below.
	We have $\|v(t)\|\approx\Phi(t)$, where 
	$\Phi(t):=(\ln(e+t))^{-1/2}$.
	As $\|v(t)\|\lesssim\Phi(t)$ and $\Phi$ satisfies conditions~\eqref{Ass-Phi0}-\eqref{Ass-Phi2}, Conclusion~ii) of Theorem~\ref{th:intro} applies, and we obtain $\|u(t)-v(t)\|\lesssim\Phi(t)^2$.
	But since we also have $\Phi(t)\lesssim\|v(t)\|$, we deduce that $u(t)$  behaves asymptotically like $v(t)$ in the $L^2$-norm: we have $\|u(t)-v(t)\|=o(\|v(t)\|)$ as $t\to+\infty$.
	In particular, $\|u(t)\|\approx\|v(t)\|$.
	Notice that the application of Theorem~\ref{Wiegner_thm} in the 2D case
	would only give the weaker result $\|u(t)\|\to0$ as $t\to+\infty$, without a priori excluding that
	$\|u(t)\|$
	may decay slower or faster than $\|v(t)\|$.

	\begin{Remark}
		We point out that~\eqref{assu1:f} is purely an $L^2$-integrability condition on $t^{1/2} \| f(t) \|$, with no reference to algebraic decay. In particular,
		Condition~\eqref{assu1:f} is considerably more general than Wiegner's condition~\eqref{eqn_2_f} on the external force, even when $\alpha=0$.
		Indeed, condition~\eqref{eqn_2_f} implies $\|f(t)\|\lesssim (1+t)^{-1}$, so $t\|f(t)\|^2\lesssim\|f(t)\|$. But~\eqref{eqn_2_f} also implies $f \in L^{1}((0, \infty), L^{2}(\R^{2}))$, and we get~\eqref{assu1:f}.
	\end{Remark}

	\begin{Remark}
		\label{rem:come}
		Conclusion~iii) allows to achieve Wiegner's optimal 2D-decay rate 
		$O(t^{-(d+2)/4})=O(t^{-1})$ for $\|u(t)-v(t)\|$,
		which is the same as in Theorem~\ref{Wiegner_thm}, but under weaker conditions.
		It is easy to prescribe conditions on $u_0$ and $f$ implying
		that $v\in L^2(\R^+L^2(\R^2))$, which in turn ensures the required condition $\Phi\in L^2(\R^+)$ for the application of  Conclusion~{iii)}:
		indeed, $\|v\|_{L^2(\R^+,L^2(\R^2))}\le \|u_0\|_{\dot H^{-1}(\R^2)}+\|f\|_{L^2(\R^+,\dot H^{-2}(\R^2))}$ (see~\cite{BahCD11}*{Lemma 4.10}).
	\end{Remark}

	\begin{Remark}[The higher dimensional case]\label{3D1}    
		The main interest of Conclusions i)-ii) is that the function $\Phi$
		can be chosen to decay arbitrarily slowly as $t\to+\infty$.
		In dimension $d\ge3$, the results of the previous
		theorems can be essentially covered, at least for slowly decaying
		profiles $\Phi$, as a direct consequence of Wiegner's theorem.
		The present observation does not apply in the 2D case, because we do not have any information on the decay rate of $\|u(t)-v(t)\|$ from Wiegner's Theorem \ref{Wiegner_thm} in 2D when $\alpha=0$. 
		More precisely, when $d\ge3$, under the sole assumption~$u_0\in L^2_\sigma(\R^d)$
		(assuming here $f\equiv0$ for simplicity), one has the following key property:	
		\begin{equation}\label{3D2}
			\| u(t) - e^{t\Delta}u_0 \|_{L^{2}(\R^d)}
			= o( t^{-(d-2)/4}),
		\end{equation}
		by~\eqref{concl:Wie}.
		In particular, if we have 
		\begin{equation}\label{3D3}
			\|e^{t\Delta}u_0 \|_{L^2(\R^d)}
			\leq \Phi(t)\qquad (t \gg 1),
		\end{equation}
		then, as soon as $\Phi$ satisfies a condition of mild decay, such as~\emph{e.g.} 
		$\liminf_{t\,\rightarrow\,\infty} t^{-(d-2)/4}\Phi(t)> 0$, we will automatically have 
		\begin{equation}\label{3D4}
			\| u(t) \|_{L^2(\R^d)}
			=	O(\Phi(t))
			\qquad\text{and}\qquad
			\| u(t) - e^{t\Delta}u_0    
			\|_{L^2(\R^d)}
			= o(\Phi(t)),
		\end{equation}
		which agrees with Conclusion~i) of Theorem~\ref{th:intro}.
		Similarly,
		when $d\ge3$,
		it readily follows by Wiegner's theorem that
		\begin{equation*}
			\| u(t) \|_{L^2(\R^d)}= O(\Phi(t))
			\qquad\text{and}\qquad
			\| u(t) - e^{t\Delta}u_0\|_{L^2(\R^d)}
			= o(\Phi(t)^2),
		\end{equation*}
		by only assuming the slightly more stringent slow decay condition
		$\liminf_{t\,\rightarrow\,\infty}  
		t^{-(d-2)/4}\Phi(t)^{2} > 0$. 
	\end{Remark}

	\paragraph{\bf Notation} 
	For a function 
	$w = (w_1, \ldots, w_d) $,
	writing $w \in L^{2}(\R^{d}) $
	means that
	$w_{i}\in L^{2}(\R^{d}) $
	for all $ 1 \leq i \leq d $,
	while $w\in L^{2}_{\sigma}(\R^{d}) $
	means that $w \in L^{2}(\R^{d}) $
	and $\mbox{div}\,w = 0 $ 
	in the distributional sense.
	The orthogonal projection from $L^2(\R^d)$ onto $L^2_\sigma(\R^d)$
	is denoted by $\P$. Classical results on the Riesz transform imply that
	this map extends as a bounded linear operator on $L^p(\R^d)$, for $1<p<+\infty$.  
	For brevity, we will often simply write 
	$ \| \cdot \|$ instead of $\| \cdot \|_{L^{2}(\R^d)}$.
	
	We use $o(1)$ to denote
	a function that vanishes at infinity,
	$ O(1) $ for a bounded function and, similarly:
	$ O(g(t)) = O(1)\hspace{+0.030cm}g(t)$ and 
	$ o(g(t))= o(1) g(t)$.
	The notation $A(t)\lesssim B(t)$ 
	means that there exists a constant~$c>0$, independent of~$t$, 
	such that $A(t)\le cB(t)$. The notation $A(t)\approx B(t)$ means that we have both 
	$A(t)\lesssim B(t)$ and $B(t)\lesssim A(t)$.
	
	We denote by $\mathcal{S}'(\R^d)$ 
	the space of tempered distributions.
	The Fourier transform of a function (or of a tempered distribution)~$f$
	is denoted by $\widehat f$.
	We denote by $e^{t\Delta}$ the heat semigroup.
	
	The integrals over the whole space will be denoted simply by 
	$\int$, instead of $\int_{\R^d}$, unless the explicit indication
	of $ \R^d $ is more convenient.	
	
	Finally, for two vectors $a,b\in\R^d$, we denote by $a\otimes b\in\mathrm{Mat}_{d\times d}(\R)$ the matrix given by $(a\otimes b)_{ij}=a_ib_j$. For a matrix $A=A(x)\in\mathrm{Mat}_{d\times d}(\R)$, we define its divergence by $(\mathrm{div}\,A)_i:=\sum_j\partial_jA_{ji}$.

	\section{Proof of the main theorems}
	
	Let us start by discussing some elementary consequences of~\eqref{Ass-Phi0}-\eqref{Ass-Phi1}.
	These conditions imply that, for all $t>0$,
	\begin{equation}
		\label{doubling}
		\Phi(t/2)\le \frac{2}{t}\int_0^{t/2}\Phi(s)\dd s
		\le \frac{2}{t}\int_0^{t}\Phi(s)\dd s
		\le  2\kappa\Phi(t).
	\end{equation}
	Moreover, for all $t\ge1$, 
	\begin{equation}
		\label{lower_Phi}
		t\Phi(t)\ge\frac{1}{\kappa}\int_0^t\Phi(s)\dd s
		\ge\frac{1}{\kappa}\int_0^1\Phi=:c^{-1}>0.
	\end{equation}
	Notice that the condition $\Phi(0)>0$ would not be sufficient to imply that
	$c^{-1}>0$, so the stronger condition $\Phi(0+)>0$ is needed in~\eqref{Ass-Phi0}.
	
	Another consequence of~\eqref{Ass-Phi0}-\eqref{Ass-Phi1} is that
	\begin{equation}
		\label{infi}
		\lim_{t\to+\infty} t\Phi(t)=+\infty.
	\end{equation}
	Indeed, otherwise there would exist a constant $C>0$ and
	a sequence $t_n\to+\infty$ such that
	$\int_0^{t_n} \Phi(s)\dd s\le \kappa t_n\Phi(t_n)\le C$, and by the monotone convergence theorem we would obtain $\Phi\in L^1(\R^+)$.
	But then $t\Phi(t)\ge \frac{1}{2\kappa}\int_0^\infty\Phi(s)\dd s$ for sufficiently large $t$, which in turn contradicts that $\Phi\in L^1(\R^+)$ when $\Phi\not\equiv0$. 
	
	The first ingredient is the following lemma.

	\begin{Lemma}\label{gradient_decay_f}
		Let  $u_0\in L^2_\sigma(\R^2)$ and $f$ satisfy~
		\eqref{assu0:f}-\eqref{assu1:f}. Let $u$ be the Leray solution
		of the Navier--Stokes equations \eqref{NS} with $d=2$, starting from~$u_0$.
		Then, 
		\begin{equation*}
			\| \nabla u (t) \| = o(t^{-1/2}), \quad \text{as} \quad t \to +\infty. 		
		\end{equation*}
	\end{Lemma}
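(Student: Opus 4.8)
The plan is to exploit the strong form of the energy inequality together with the decay $\|u(t)\|\to0$ (already guaranteed by Wiegner's Theorem~\ref{Wiegner_thm}), and to upgrade it to a decay estimate on $\|\nabla u\|$ by a Gronwall-type argument specific to the 2D case. First I would fix a time $t$ and write~\eqref{eqn_energy}, or rather its consequence~\eqref{SEI}, between $s$ and $t$: since $\sqrt{s}\|f(s)\|\in L^2([T,\infty))$ and $\|f\|\in L^1$, the forcing contribution $\int_s^t\|f(\tau)\|\dd\tau$ is $o(1)$ as $s\to+\infty$, uniformly in $t\ge s$. Combined with $\|u(s)\|^2\to0$, this already yields $\int_s^\infty\|\nabla u(\tau)\|^2\dd\tau\to0$, i.e. $\|\nabla u\|^2$ is integrable near $+\infty$ with vanishing tail. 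This is the ``smallness'' input that the nonlinear estimate needs.

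Next I would use the 2D differential inequality for the $\dot H^1$ norm. Testing the equation against $-\Delta u$ (this is legitimate for Leray solutions in 2D, where regularity and uniqueness hold), one gets, schematically,
\[
\frac{d}{dt}\|\nabla u\|^2 + 2\|\Delta u\|^2 \le 2\Big|\int (u\cdot\nabla u)\cdot\Delta u\Big| + 2\Big|\int f\cdot\Delta u\Big|.
\]
The trilinear term is controlled in 2D by the Ladyzhenskaya/Gagliardo--Nirenberg inequality $\|u\|_{L^4}^2\lesssim\|u\|\,\|\nabla u\|$ and $\|\nabla u\|_{L^4}^2\lesssim\|\nabla u\|\,\|\Delta u\|$, giving a bound of the form $C\|u\|\,\|\nabla u\|^{?}\|\Delta u\|^{?}$ which, after Young's inequality, is absorbed partly into $\|\Delta u\|^2$ and leaves a term like $C\|\nabla u\|^4$ (or $C\|u\|^2\|\nabla u\|^2\cdot\|\nabla u\|^2$); the forcing term is handled by $2|\int f\cdot\Delta u|\le \|\Delta u\|^2 + \|f\|^2$. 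The upshot is an inequality of the form $\frac{d}{dt}\|\nabla u\|^2 \le g(t)\|\nabla u\|^2 + \|f(t)\|^2$ with $g\in L^1$ near infinity (since $g\lesssim\|\nabla u\|^2$, which we just showed is integrable with small tail), plus possibly an extra absorbed term.

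Then I would run the following ODE lemma: if $y(t)\ge0$, $y'\le g\,y + h$, with $\int_t^\infty g<\infty$ having vanishing tail, $\int_t^\infty h = o(t^{-1})$ (here $h=\|f\|^2$ and $\sqrt t\|f\|\in L^2$ gives exactly $\int_t^\infty\|f(s)\|^2\dd s = o(t^{-1})$), and $\int_t^\infty y<\infty$ with vanishing tail, then $y(t)=o(t^{-1})$. The mechanism: integrate from $s$ to $t$, use the mean-value/averaging trick — for each $t$ pick $s\in(t/2,t)$ with $y(s)\le \frac{2}{t}\int_{t/2}^t y \le \frac{2}{t}\int_{t/2}^\infty y = o(t^{-1})$ (using the vanishing tail), and then $y(t)\le y(s)\exp(\int_s^t g) + \int_s^t h \le o(t^{-1})\cdot O(1) + o(t^{-1})$. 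This gives the claim.

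\textbf{Main obstacle.} The delicate point is making the $\dot H^1$ energy estimate rigorous at the level of Leray solutions rather than smooth solutions: in 2D this is standard (weak solutions are strong and unique), so one really just needs to invoke 2D regularity theory and argue that the differential inequality holds for a.e.\ $t$ with the right integrated version. The other point requiring care is verifying that the nonlinear term genuinely produces a coefficient $g$ that is integrable with \emph{vanishing} tail — this hinges precisely on $\int_t^\infty\|\nabla u\|^2\to0$, which is why the first step (extracting smallness from the energy inequality plus $\|u(t)\|\to0$ plus the $L^1$/weighted-$L^2$ control of $f$) is doing the real work. Everything else is a routine Gronwall/averaging argument, and the hypothesis $\sqrt t\|f(t)\|\in L^2([T,\infty))$ is exactly calibrated to give the $o(t^{-1})$ (rather than merely $O(t^{-1})$ or $o(t^{-1/2})$ squared) strength needed to close at the stated rate.
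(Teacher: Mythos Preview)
Your approach is correct and establishes the lemma, but by a genuinely different route than the paper. The paper proceeds via time-weighted multipliers: it first multiplies the velocity equation by $2(t-t_0)^\sigma u$ to obtain, for any $\epsilon>0$ and $t_0$ large, $\int_{t_0}^t(s-t_0)^\sigma\|\nabla u(s)\|^2\dd s\le\epsilon(t-t_0)^\sigma$; then it multiplies the 2D vorticity equation $\omega_t+u\cdot\nabla\omega=\Delta\omega+\mbox{curl\,}f$ by $2(t-t_0)^{1+\sigma}\omega$ and integrates. Since the transport term $\int(u\cdot\nabla\omega)\omega$ vanishes by incompressibility, the nonlinearity disappears entirely and one reads off $(t-t_0)^{1+\sigma}\|\omega(t)\|^2\lesssim\int_{t_0}^t(s-t_0)^\sigma\|\omega\|^2+\int_{t_0}^t(s-t_0)^{1+\sigma}\|f\|^2\le C\epsilon(t-t_0)^\sigma$, using~\eqref{assu1:f} for the forcing integral.

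Your differential-inequality/Gronwall/averaging argument is equally valid; in fact the trilinear term $\int(u\cdot\nabla u)\cdot\Delta u$ that you bound via Ladyzhenskaya is \emph{identically zero} in 2D (equivalently: testing NS against $-\Delta u$ coincides with testing the vorticity equation against $\omega$), so your coefficient $g$ can be taken to be $0$ and the inequality is simply $\frac{d}{dt}\|\nabla u\|^2\le\|f\|^2$, after which the averaging step $y(s)\le\frac{2}{t}\int_{t/2}^\infty\|\nabla u\|^2=o(t^{-1})$ and $\int_{t/2}^\infty\|f\|^2\le\frac{2}{t}\int_{t/2}^\infty s\|f(s)\|^2\dd s=o(t^{-1})$ close immediately. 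The paper's weighted-multiplier scheme has the advantage of upgrading effortlessly to quantitative rates (as noted in the remark following the lemma, $\|u(t)\|=O(t^{-\gamma})$ and $\|f(t)\|=O(t^{-1-\gamma})$ yield $\|\nabla u(t)\|=O(t^{-\gamma-1/2})$ by the same computation with exponent $\sigma+2\gamma$), whereas your mean-value selection is tailored to the qualitative $o$ statement.
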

	\begin{proof}
		Let $\sigma>0$ and $t_0 > 0$. We multiply the first equation in the Navier--Stokes system \eqref{NS} by $2\,(s-t_0)^\sigma\,u$ and integrate the result over $\RR^2 \times [t_0,t]$. After a few computations, we get
		\begin{multline*}
			(t-t_0)^\sigma\,\|u(t)\|^2 
			+ 2 \int_{t_0}^t (s-t_0)^\sigma \| \nabla u (s) \|^2 \dd s\\
			\leq \sigma\,\int_{t_0}^{t} (s-t_0)^{\sigma-1} \|u(s) \|^2 \dd s
			+ 2 \int_{t_0}^t (s-t_0)^\sigma \|u(s)\|\,\| f(s) \| \dd s.
		\end{multline*}
		Let $\epsilon>0$.
		According to Theorem~\ref{Wiegner_thm}, $\|u(t)\|\to0$ as $t\to+\infty$ (recalling that assumption~\eqref{assu0:f} alone ensures this). From this fact and assumption~\eqref{assu0:f} on $f$, we have, for $t_0$ sufficiently large and all $t\ge t_0$, 
		\begin{equation*}
			\begin{split}
				(t-t_0)^\sigma\,\|u(t)\|^2 
				+ 2 \int_{t_0}^t (s-t_0)^\sigma \| \nabla u (s) \|^2 \dd s
				&\leq  
				\epsilon (t-t_0)^\sigma  + 2\,\sqrt{\epsilon}\,\int_{t_0}^t 
				(s-t_0)^\sigma \,\| f(s) \| \dd s\\
				&\leq 2 \epsilon (t-t_0)^\sigma.
			\end{split}
		\end{equation*}
		So in particular,
		\begin{equation}
			\label{integ:1}
			\int_{t_0}^t (s-t_0)^\sigma \| \nabla u (s) \|^2 \dd s
			\le \epsilon (t-t_0)^\sigma.
		\end{equation}
		Let us denote by $\omega:=\mbox{curl\,} u=\partial_1u_2-\partial_2u_1$ the vorticity of $u$, which satisfies the equation
		\begin{equation}\label{vorticity}
			\partial_t \omega + u \cdot \nabla \omega = \Delta \omega+\mbox{curl\,}f. 
		\end{equation}
		Multiplying equation~\eqref{vorticity} by $2\,(s-t_0)^{\sigma +1}\omega$,
		we obtain, after space-time integration,
		\begin{multline*}
			(t-t_0)^{1 + \sigma} \,\|\omega(t)\|^2 
			+ 2 \int_{t_0}^t (s-t_0)^{1+\sigma} \| \nabla \omega (s) \|^2 \dd s\\
			\begin{aligned}
				&=(1+\sigma\,)\int_{t_0}^{t} (s-t_0)^{\sigma} \|\omega(s) \|^2 \dd s 
				+ 2 \int_{t_0}^t (s-t_0)^{1+\sigma} \int_{\R^2} (\mbox{curl}\, f)\,\omega \dd x\dd s\\ 
				&\leq
				(1+\sigma\,)\int_{t_0}^{t} (s-t_0)^{\sigma} \|\omega(s) \|^2 \dd s 
				+2\int_{t_0}^{t} (s-t_0)^{1+\sigma} \| f(s) \|
				\, \|\nabla \omega (s)\| \dd s.
			\end{aligned}
		\end{multline*}
		Applying the Young inequality $2ab\le a^2+b^2$ to the last integrand, and absorbing one of the two terms in the integral on the left-hand side, we deduce
		\begin{multline*}
			(t-t_0)^{1 + \sigma}\,\|\omega(t)\|^2 
			+ \int_{t_0}^t (s-t_0)^{1+\sigma} \| \nabla \omega (s) \|^2 \dd s\\
			\leq 
			(1+\sigma\,)\int_{t_0}^{t} (s-t_0)^{\sigma} \|\omega(s) \|^2 \dd s 
			+\int_{t_0}^{t}	(s-t_0)^{1+\sigma} \| f(s) \|^2 \dd s.
		\end{multline*}
		But as $\nabla\cdot u=0$, we have $\|\omega(s)\|=\|\nabla u(s)\|$, and we can apply \eqref{integ:1} to gather
		\begin{equation*}
			(t-t_0)^{1 + \sigma}\,\|\omega(t)\|^2 + \int_{t_0}^t (s-t_0)^{1+\sigma} \| \nabla \omega (s) \|^2 \dd s
			\leq (1+\sigma)\,\epsilon\,(t-t_0)^{\sigma} 
			+ \int_{t_0}^{t}	(s-t_0)^{1+\sigma} \| f(s) \|^2 \dd s.
		\end{equation*}
		Now, in view of condition~\eqref{assu1:f}, we obtain, by possibly increasing $t_0$, 
		\begin{equation*}
			\begin{split}
				\int_{t_0}^{t}	(s-t_0)^{1+\sigma} \| f(s) \|^2 \dd s
				\leq (t-t_0)^\sigma\int_{t_0}^t s\|f(s)\|^2\dd s		
				\leq \epsilon\,(t-t_0)^\sigma. 
			\end{split}
		\end{equation*}
		Therefore, for some constant $C>0$ and all $t>t_0$,
		\begin{equation*}
			\|\omega(t)\|^2 = \|\nabla u (t)\|^2 \leq C\,\epsilon\,(t-t_0)^{-1}.
		\end{equation*}
		So $\|\nabla u(t)\| = o(t^{-1/2})$ as $t\to+\infty$ as desired. 
	\end{proof} 	
	
	\begin{Remark}
		\label{improved-grad-rmk}
		It is worth noting that the above argument improves upon the result presented in \cite{GuterresNichePerusatoZingano2023} in the following sense: there, in order to establish the simplest case of algebraic decay for $\nabla u$, the authors assumed stronger conditions (see~\emph{e.g.} \cite{GuterresNichePerusatoZingano2023}*{Theorem 1.9}), for instance involving the first derivative of $f$. 
		By contrast, our argument requires on~$f$ only to satisfy conditions~\eqref{assu0:f}-\eqref{assu1:f}.
		The previous argument can also be applied to obtain the following fact: 
		if, for some $\gamma>0$, we have $\|u(t) \|=O(t^{-\gamma})$
		and $\|f(t)\|=O(t^{-1-\gamma})$ as $t\to+\infty$, then
		\begin{equation}\label{improved-grad}
			\|\nabla u(t)\| = O(t^{-\gamma - 1/2} ),\,\, \text{as}\,\, t\to+\infty.  
		\end{equation}
		In order to obtain \eqref{improved-grad}, it suffices to multiply the first equation in the Navier--Stokes system \eqref{NS} by $2(s - t_0)^{\sigma + 2\gamma} u$ instead of $2(s - t_0)^{\sigma} u$, and then to repeat the same steps performed in the proof of Lemma \ref{gradient_decay_f} above.
	\end{Remark}

	With the preceding results at hand, we are ready to prove 
	the first conclusion of Theorem~\ref{th:intro}.
	
	\begin{proof}[Proof of Assertion~i) of Theorem~\ref{th:intro}]
		Let us write the Duhamel formula
		\[
		u(t) = v(t) - \int_0^te^{(t-s)\Delta}\P(u\cdot\nabla)u(s)\dd s.
		\]
		As $\nabla\cdot u=0$, we have $(u\cdot\nabla)u=\nabla\cdot(u\otimes u)$, hence we can decompose 
		\[
		u(t) = v(t) - \int_0^{t/2} e^{(t-s)\Delta}\P\nabla\cdot (u\otimes u)(s)\dd s - \int_{t/2}^t e^{(t-s)\Delta}\P(u\cdot\nabla)u(s)\dd s.
		\]
		Since $\P$ is bounded in $L^2$ and by the usual $L^1$-$L^2$ estimates for 
		$e^{t\Delta}$ and $e^{t\Delta}\nabla$, and using also that $\|v(t)\|\lesssim\Phi(t)$, we have, for some absolute constant $C_0>0$ and all $t>0$,
		\begin{equation}
			\label{L2du}
			\|u(t)\|\le C_0\left[\Phi(t)+\int_0^{t/2} (t-s)^{-1}\|u(s)\|^2\dd s
			+\int_{t/2}^t (t-s)^{-1/2}\|u(s)\|\,\|\nabla u(s)\|\dd s\right].
		\end{equation}
		
		Assumptions~\eqref{Ass-Phi0}-\eqref{Ass-Phi1} imply that $\Phi>0$. 
		Then, inspired by~\cite{ZinganoJMFM}, we can introduce, for all $t\ge0$, the quantities
		\[
		E(t):=\frac{\|u(t)\|}{\Phi(t)}
		\qquad	\text{and} 		\qquad
		E_{\text{max}}(t):=\sup_{s\in[0,t]}E(s).
		\]
		From~\eqref{L2du}, we have
		\begin{equation}
			\label{AA3}
			E(t)\le C_0 \big(1 + I_1(t) + I_2(t)\big),
		\end{equation}
		where
		\[
		I_1(t):=\frac{1}{\Phi(t)}\int_0^{t/2} (t-s)^{-1}\|u(s)\|^2\dd s
		\qquad	\text{and} 	\qquad
		I_2(t):=\frac{1}{\Phi(t)}\int_{t/2}^t (t-s)^{-1/2}\|u(s)\|\,\,\|\nabla u(s)\|\dd s.
		\]
		Notice that the integral defining $I_2(t)$ is finite for every $t>0$ because
		$\| \nabla u(s)\| = o(s^{-1/2})$ by Lemma~\ref{gradient_decay_f} and $\|u(s)\|$ is bounded. Let us first estimate $I_1$. From~\eqref{SEI}, we have, for all $t\ge0$,
		\begin{equation*}
			\|u(t)\|^2 \leq \| u_0 \|^2 + C\int_0^\infty \|f(\tau)\|\, \dd \tau\,=:\, M.
		\end{equation*}
		As previously observed, we have $\|u(t)\|\to0$ as $t\to+\infty$,
		hence there exists $t_1\ge1$ such that $\|u(s)\|\le (8C_0\kappa )^{-1}$ for all $s\ge t_1$, where $\kappa$ is given by \eqref{Ass-Phi1}.
		Then, for all $t\ge 2t_1$,
		\[
		\begin{split}
			I_1(t)
			&=\frac{1}{\Phi(t)}\biggl[\int_0^{t_1} (t-s)^{-1}\|u(s)\|^2\dd s
			+\int_{t_1}^{t/2} (t-s)^{-1}\|u(s)\|^2\dd s\biggr]\\
			&\le \frac{1}{\Phi(t)}\biggl[ M t_1(t-t_1)^{-1}
			+ \frac{2 (8C_0\kappa )^{-1} }{t}\int_{t_1}^{t/2} \|u(s)\|\dd s\biggr]\\
			&\le 
			\frac{1}{\Phi(t)}\biggl[ M t_1(t-t_1)^{-1}
			+ \frac{2 (8C_0\kappa )^{-1} }{t}\int_{t_1}^{t} E(s)\Phi(s)\dd s\biggr].
		\end{split}
		\]
		So, applying~\eqref{Ass-Phi1} and recalling~\eqref{lower_Phi},
		\[
		\begin{split}
			I_1(t)
			&\le \frac{2Mt_1}{t\Phi(t)}
			+\frac{2 (8C_0\kappa )^{-1} }{t\Phi(t)}\left(\int_{t_1}^{t} \Phi(s)\dd s\right) 					E_{\text{max}}(t)\\
			&\le \frac{2Mt_1}{t\Phi(t)}
			+2\kappa  (8C_0\kappa )^{-1} E_{\text{max}}(t)\\
			&\le  
			2cM t_1+
			\frac{E_{\text{max}}(t)}{4C_0}.
		\end{split}
		\]
		We now estimate $I_2$. By Lemma~\ref{gradient_decay_f},  $s^{1/2} \|\nabla u(s)\| \to 0$ as $s\to + \infty$.
		Therefore, 
		\begin{equation}
			\label{eps(t)}
			\varepsilon(t): =  \int_{t/2}^t (t-s)^{-1/2} \|\nabla u(s)\| \dd s \to0,
			\quad\text{as $t\to+\infty$.}
		\end{equation}
		We deduce that there exists $t_2\ge2t_1$ such that, for all $t\ge t_2$,
		\[
		\varepsilon(t)\le   (8C_0\kappa )^{-1}.
		\]
		Applying this and using ~\eqref{doubling}, we have, for all $t\ge t_2$, 
		\[
		\begin{split}
			I_2(t)
			&=
			\frac{1}{\Phi(t)}
			\int_{t/2}^t (t-s)^{-1/2}\|u(s)\|\,\,\|\nabla u(s)\|\dd s\\
			&\le
			\frac{1}{\Phi(t)}
			\int_{t/2}^t (t-s)^{-1/2}E(s)\Phi(s)\|\nabla u(s)\|\dd s\\
			&\le
			\frac{\Phi(t/2)}{\Phi(t)}E_{\text{max}}(t)
			\int_{t/2}^t (t-s)^{-1/2}\|\nabla u(s)\|\dd s\\
			&\le  2\kappa E_{\text{max}}(t)\varepsilon(t)\\
			&\le   \frac{E_{\text{max}}(t)}{4C_0}.
		\end{split}
		\]
		Going back to~\eqref{AA3}, we deduce that
		\[
		\begin{split}
			E(t)
			&\le
			C_0\bigl(1+
			2 c M t_1\bigr) +
			\frac{E_{\text{max}}(t)}{2},\qquad\forall\,t\ge t_2.
		\end{split}
		\]
		This implies
		\begin{equation}
			\label{dec:u}
			\|u(t)\|\le 
			2C_0\bigl(1+
			2 c M t_1\bigr)\Phi(t),
			\qquad\forall\,t\ge t_2.
		\end{equation}

		We now estimate the difference $\theta(t)=u(t)-v(t)$.
		For this purpose, observe that
		\[
		\theta(t)=-\int_0^t e^{(t-s)\Delta}\P (u\cdot\nabla) u(s)\dd s,
		\]
		hence
		\begin{equation}
			\label{duapre}
			\|\theta(t)\|
			\le C_0\int_0^{t/2} (t-s)^{-1}\|u(s)\|^2\dd s
			+C_0\int_{t/2}^t(t-s)^{-1/2}\|u(s)\|\,\|\nabla u(s)\|\dd s.
		\end{equation}
		Let us estimate the first integral. Using the decay property~\eqref{dec:u}, we obtain, for some constant $C>0$ and all $t>0$, 
		\begin{equation}
			\label{thetaine}
			\int_0^{t/2} (t-s)^{-1}\|u(s)\|^2\dd s \le 2t^{-1} \int_0^{t/2} \|u(s)\|^2\dd s
			\le Ct^{-1}\int_0^{t}\Phi(s)^2\dd s.
		\end{equation}
		Let $M':=\Phi(0)$. From assumption~\eqref{Ass-Phi0}, we have $\Phi(t)\to0$ as $t\to+\infty$, hence there exists $t_3\ge t_2$ such that $\Phi(t)\le1$ for all $t\ge t_3$. Define $\lambda(t):=t\Phi(t)$. Then, for all $t\ge t_3$, we have~$0<\lambda(t)\le t$, and 
		\[
		\begin{split}
			t^{-1}\int_0^{t}\Phi(s)^2\dd s
			&\le t^{-1}\int_0^{\lambda(t)}\Phi(s)^2\dd s
			+ t^{-1}\int_{\lambda(t)}^t\Phi(s)^2\dd s\\
			&\le M' t^{-1}\frac{\lambda(t)}{\lambda(t)}
			\int_0^{\lambda(t)}\Phi(s)\dd s
			+ t^{-1}\Phi(\lambda(t))\int_{\lambda(t)}^t\Phi(s)\dd s.\\
		\end{split}
		\]
		Applying the Cesàro condition~\eqref{Ass-Phi0} at the argument \(\lambda(t)\) to the first average
		and at \(t\) to the second (after extending the integral to \([0,t]\)) gives
		\[
		\begin{split}
			t^{-1}\int_0^{t}\Phi(s)^2\dd s \le M' \kappa t^{-1}\lambda(t)\Phi(\lambda(t))
			+ \kappa \Phi(t)\Phi(\lambda(t))
			\le \kappa(M'+1)\Phi(t)\Phi(t\Phi(t)). 
		\end{split}
		\]
		Let us now estimate the second integral in \eqref{duapre}. From the decay~\eqref{dec:u}, property~\eqref{doubling} and the definition \eqref{eps(t)} of $\varepsilon(t)$, we have, for all $t\ge t_3$,
		\begin{equation}
			\label{thetaine2}
			\int_{t/2}^t(t-s)^{-1/2}\|u(s)\|\,\|\nabla u(s)\|\dd s
			\lesssim \Phi(t)\int_{t/2}^t (t-s)^{-1/2}\|\nabla u(s)\|\dd s
			\lesssim \Phi(t)\varepsilon(t).
		\end{equation}
		We finally get
		\[
		\begin{split}
			\|\theta(t)\|
			&\lesssim \Phi(t)\Phi(t\Phi(t))
			+\Phi(t)\varepsilon(t)= o(\Phi(t)), 
			\qquad\text{as $t\to+\infty$},
		\end{split}
		\]
		because $t\Phi(t)\to+\infty$ by ~\eqref{infi}, and $\Phi(t)\to0$ as $t\to+\infty$.	
	\end{proof}

	Before proving the other assertions of Theorem~\ref{th:intro},
	we need another lemma, which is a refinement
	of~Lemma~\ref{gradient_decay_f}.
	
	\begin{Lemma}\mbox{}
		\label{lem:cil}
		Let  $u_0\in L^2_\sigma(\R^2)$ and $f$ satisfy~
		\eqref{assu0:f} and \eqref{assu2:f}. Let $u$ be the Leray solution
		of the Navier--Stokes equations \eqref{NS} with $d=2$, and $v$ be the solution of the heat equation~\eqref{stokes-flow}, starting from~$u_0$.
		Let $\Phi$ satisfy conditions~\eqref{Ass-Phi0}-\eqref{Ass-Phi1}, and be such that
		\[
		\|v(t)\| = O(\Phi(t)),\quad\text{as $t\to+\infty$}.
		\]
		Then, 
		\[
		\|\nabla u(t)\| = O(t^{-1/2}\Phi(t)),\quad\text{as $t\to+\infty$}.
		\]
	\end{Lemma}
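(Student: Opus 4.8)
The strategy is to mimic the weighted-energy argument from Lemma~\ref{gradient_decay_f}, but now tracking the sharp decay rate $\Phi$ rather than just the $o(t^{-1/2})$ qualitative statement. Since we have already established in Assertion~i) of Theorem~\ref{th:intro} that $\|u(t)\|\lesssim \Phi(t)$ (and in particular $\|u(t)\|\le M$ for all $t$), the plan is to feed this improved bound into the vorticity-energy inequality. First I would fix $\sigma>0$ and a large $t_0\ge T$, multiply the first Navier--Stokes equation by $2(t-t_0)^\sigma u$ and integrate over $\R^2\times[t_0,t]$, exactly as before; using $\|u(s)\|\lesssim\Phi(s)$, the monotonicity~\eqref{Ass-Phi0}, the doubling property~\eqref{doubling}, and assumption~\eqref{assu1:f} on $f$, this should yield a weighted bound of the form
\[
\int_{t_0}^t (s-t_0)^\sigma\|\nabla u(s)\|^2\dd s \lesssim (t-t_0)^\sigma\Phi(t)^2,
\]
the key point being that $\frac1t\int_0^t\Phi(s)^2\dd s$ is controlled: here is where condition~\eqref{Ass-Phi1} (applied to $\Phi^2$ via monotonicity, or a splitting argument as in the proof of Assertion~i)) enters, together with the assumption $\|f(s)\|=O(s^{-1})\Phi(s)$ from~\eqref{assu2:f}, which makes the forcing contribution $\int s\|f(s)\|^2\dd s \lesssim \int s^{-1}\Phi(s)^2\dd s$ comparable to $\Phi(t)^2$.

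Next I would pass to the vorticity $\omega=\operatorname{curl}u$, which satisfies~\eqref{vorticity} with $\|\omega\|=\|\nabla u\|$, multiply~\eqref{vorticity} by $2(t-t_0)^{1+\sigma}\omega$, integrate in space-time, and apply the Young inequality $2ab\le a^2+b^2$ to the $\operatorname{curl}f$ term to absorb $\|\nabla\omega\|^2$ into the left-hand side, just as in Lemma~\ref{gradient_decay_f}. This produces
\[
(t-t_0)^{1+\sigma}\|\nabla u(t)\|^2 \lesssim (1+\sigma)\int_{t_0}^t (s-t_0)^\sigma\|\nabla u(s)\|^2\dd s + \int_{t_0}^t (s-t_0)^{1+\sigma}\|f(s)\|^2\dd s.
\]
Into the first term on the right I substitute the weighted bound from the previous paragraph, getting $\lesssim (t-t_0)^\sigma\Phi(t)^2$; for the second term, bound $(s-t_0)^{1+\sigma}\le (t-t_0)^\sigma\cdot s$ and use $\|f(s)\|^2=O(s^{-2})\Phi(s)^2$ so that $\int_{t_0}^t s\|f(s)\|^2\dd s \lesssim \int_{t_0}^t s^{-1}\Phi(s)^2\dd s \lesssim \Phi(t)^2$ by~\eqref{Ass-Phi1}. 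Dividing through by $(t-t_0)^{1+\sigma}$ gives $\|\nabla u(t)\|^2 \lesssim (t-t_0)^{-1}\Phi(t)^2$ for $t>t_0$, i.e. $t^{1/2}\|\nabla u(t)\|\lesssim\Phi(t)$ for large $t$; the bound for $t$ in a compact set (and hence for all $t\ge0$) follows from continuity and the fact that $\Phi$ is bounded below away from zero on compacts by~\eqref{Ass-Phi0}.

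\emph{Main obstacle.} The delicate point is controlling the averaged quantity $\frac1t\int_0^t\Phi(s)^2\dd s$ by $\Phi(t)^2$ \emph{without} assuming~\eqref{Ass-Phi2} — we only have~\eqref{Ass-Phi1} available in this lemma. I expect this to require the same $\lambda(t)=t\Phi(t)$ splitting trick used in the proof of Assertion~i): write $\int_0^t\Phi^2 = \int_0^{\lambda(t)}\Phi^2 + \int_{\lambda(t)}^t\Phi^2 \le \Phi(0)\int_0^{\lambda(t)}\Phi + \Phi(\lambda(t))\int_{\lambda(t)}^t\Phi \lesssim \lambda(t)\Phi(\lambda(t)) + t\Phi(t)\Phi(\lambda(t))$, and conclude $\frac1t\int_0^t\Phi^2 \lesssim \Phi(t)\Phi(t\Phi(t)) \le \Phi(t)^2$ for $t$ large (using $t\Phi(t)\to\infty$ from~\eqref{infi} and monotonicity of $\Phi$, so $\Phi(t\Phi(t))\le\Phi(t)$ eventually — wait, one must check the direction: since $t\Phi(t)\to\infty$ and $\Phi$ is decreasing, for large $t$ we have $t\Phi(t)\ge t$ is false in general, so more care is needed). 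In fact $\Phi(t\Phi(t))$ need not be $\le\Phi(t)$; but it suffices that $\Phi(t\Phi(t))\lesssim\Phi(t)$, which does \emph{not} hold in general either — so the honest conclusion from~\eqref{Ass-Phi1} alone is the weaker $\frac1t\int_0^t\Phi^2\lesssim\Phi(t)\Phi(t\Phi(t))$. I would then carry this weaker quantity through the computation; it still yields $\|\nabla u(t)\|^2\lesssim (t-t_0)^{-1}\Phi(t)\Phi(t\Phi(t)) \le (t-t_0)^{-1}\Phi(t)\Phi(0)$, hence $t^{1/2}\|\nabla u(t)\|\lesssim\Phi(t)^{1/2}$, which is \emph{not} the claimed bound. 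This suggests that either the proof uses a genuinely different argument — e.g. bootstrapping through the $L^2$ decay of $\theta=u-v$ already obtained — or that one must be cleverer about the weight $\sigma$. The resolution I would pursue: note that from Assertion~i) we also know $\|u(t)-v(t)\|=o(\Phi(t))$ and $\|v(t)\|\lesssim\Phi(t)$, and that the heat-flow part $v$ has good gradient bounds ($t^{1/2}\|\nabla v(t)\|\lesssim\Phi(t)$ when $\|v(t)\|\lesssim\Phi(t)$, provided $\Phi$ doubles), so it suffices to estimate $\nabla\theta$; reworking the weighted-energy estimate for the equation satisfied by $\theta$ (or directly for $u$ but now legitimately inserting $\|u(s)\|^2\lesssim\Phi(s)^2$ into the \emph{weighted} integral $\int(s-t_0)^\sigma\|\nabla u\|^2$) and choosing $\sigma$ large should let the good factor $(t-t_0)^\sigma$ out front dominate, so that after division only $\Phi(t)^2$ survives. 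The careful bookkeeping of exactly which averaging inequality is invoked — and verifying that~\eqref{Ass-Phi1} genuinely suffices, as the statement claims, rather than~\eqref{Ass-Phi2} — is the crux of the proof.
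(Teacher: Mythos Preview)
Your proposal has a genuine gap, and you correctly identify it yourself: with a \emph{fixed} lower endpoint $t_0$ and weight $(t-t_0)^\sigma$, the velocity-level energy estimate produces a term like $\int_{t_0}^t (s-t_0)^{\sigma-1}\|u(s)\|^2\dd s$, and bounding this by $(t-t_0)^\sigma\Phi(t)^2$ amounts to an averaged control of $\Phi^2$ that is essentially condition~\eqref{Ass-Phi2}. Since the lemma only assumes~\eqref{Ass-Phi1}, your scheme cannot close. The workarounds you sketch (large $\sigma$, bootstrapping through $\theta$, gradient bounds on $v$) do not escape this: as long as the integration starts at a fixed $t_0$, the accumulated history of $\Phi^2$ over $[t_0,t]$ enters, and~\eqref{Ass-Phi1} alone does not reduce it to $\Phi(t)^2$.

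The paper avoids the obstacle entirely by a different and much shorter device: it takes the \emph{moving} interval $[t/2,t]$ and weight $(s-t/2)$, i.e.\ it multiplies the vorticity equation by $2(s-t/2)\omega$ and integrates over $\R^2\times[t/2,t]$. This yields, after the same Young-inequality absorption you describe,
\[
\tfrac{t}{2}\|\nabla u(t)\|^2 \;\le\; 2\int_{t/2}^t \|\nabla u(s)\|^2\dd s \;+\; C\int_{t/2}^t s^{-1}\Phi(s)^2\dd s.
\]
Now both right-hand terms are \emph{local} near time $t$. The second is trivially $\le C\ln 2\,\Phi(t/2)^2\lesssim\Phi(t)^2$ by monotonicity and doubling~\eqref{doubling}. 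For the first, one does \emph{not} reuse a weighted velocity estimate but simply applies the energy inequality~\eqref{eqn_energy} between $t/2$ and $t$:
\[
2\int_{t/2}^t\|\nabla u(s)\|^2\dd s \;\le\; \|u(t/2)\|^2 + 2\int_{t/2}^t\|u(s)\|\,\|f(s)\|\dd s \;\lesssim\; \Phi(t/2)^2 \;\lesssim\; \Phi(t)^2,
\]
using $\|u\|\lesssim\Phi$ from Assertion~i), assumption~\eqref{assu0:f}, and~\eqref{doubling}. No averaging of $\Phi^2$ over a long interval is ever needed, so~\eqref{Ass-Phi1} suffices. The missing idea in your approach is precisely this use of a dyadic interval $[t/2,t]$ in place of $[t_0,t]$.
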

	
	\begin{proof}
		We use once more the vorticity equation~\eqref{vorticity}:
		multiplying it
		by $ 2(s - \frac{t}{2}\,)\,\omega(y,s) $ 
		and integrating with respect to 
		$(y,s)\in \R^2 \times [t/2, t] $, we get
		\begin{equation*}
			\frac{t}{2}\,\|\omega(t)\|^2 + 2\int_{t/2}^t
			(s -{\textstyle\frac{t}{2}}\,) \|\nabla\omega(s)\|^2 \dd s = 
			\int_{t/2}^t  \|\omega(s)\|^2 \dd s 
			+2\int_{t/2}^t\!\int (s-\textstyle\frac t2)\mbox{curl\,}f(\cdot,s)				\omega(\cdot,s)\dd s.
		\end{equation*}
		From this, we deduce
		\begin{equation*}
			\frac{t}{2}\,\|\omega(t)\|^2 + 2\int_{t/2}^t
			(s -{\textstyle\frac{t}{2}}\,) \|\nabla\omega(s)\|^2 \dd s 
			\le 
			\int_{t/2}^t  \|\omega(s)\|^2 \dd s + 2\int_{t/2}^t (s-\textstyle\frac t2)\|f(s)\|\,\|\nabla \omega(s)\|\dd s.
		\end{equation*}
		Hence, applying the Young inequality in the last integrand, we gather
		\begin{equation*}
			\frac{t}{2}\,\|\omega(t)\|^2 
			+ \int_{t/2}^t	(s -{\textstyle\frac{t}{2}}\,) \|\nabla\omega(s)\|^2 \dd s 
			\le
			\int_{t/2}^t  \|\omega(s)\|^2 \dd s + \int_{t/2}^t (s-\textstyle\frac t2)\|f(s)\|^2\dd s.
		\end{equation*}
		Recalling that $\|\omega\| = \|\nabla u\|$ and using the energy inequality~\eqref{eqn_energy}, we obtain 
		\[
		\begin{split}
			\frac t2 \|\nabla u(t)\|^2
			&\le \int_{t/2}^t \|\nabla u(s)\|^2\dd s 
			+\int_{t/2}^t (s-\textstyle\frac t2)\|f(s)\|^2\dd s \\
			&\le \|u(t/2)\|^2 
			+2\int_{t/2}^t \|u(s)\|\,\|f(s)\|\dd s + \int_{t/2}^t (s-\textstyle\frac t2)\|f(s)\|^2\dd s.
		\end{split}
		\]
		Now, the conditions~\eqref{assu0:f} and~\eqref{assu2:f} on $f$ imply~\eqref{assu1:f} by interpolation. 
		Therefore, Conclusion~i) of~Theorem~\ref{th:intro} applies, and we deduce that 
		$\|u(t)\|\lesssim \Phi(t)$.
		From this, condition~\eqref{assu2:f} and \eqref{doubling}, we deduce that $\frac t2\|\nabla u(t)\|^2 \lesssim \Phi(t)^2$, for all $t>0$.
		This establishes Lemma~\ref{lem:cil}.
	\end{proof}

	The remaining parts of~Theorem~\ref{th:intro} are now easily established.

	\begin{proof}[Proof of Assertions~ii) and iii) of Theorem~\ref{th:intro}]	
		As observed in the proof of Lemma~\ref{lem:cil}, our assumptions imply the existence of some constant $C>0$ such that $\|u(t)\|\le C\Phi(t)$, for all $t\ge0$.
		
		We go back to inequalities~\eqref{duapre}-\eqref{thetaine2} for $\theta(t)=u(t)-v(t)$ and deduce
		\[
		\|\theta(t)\| \le Ct^{-1}\int_0^{t}\Phi(s)^2\dd s
		+ C\Phi(t)\int_{t/2}^t (t-s)^{-1/2}\|\nabla u(s)\|\dd s.
		\]
		The second term is treated by applying Lemma~\ref{lem:cil} and~\eqref{doubling}, that gives
		\begin{equation}
			\label{eq:border}
			\|\theta(t)\| \lesssim t^{-1}\int_0^{t}\Phi(s)^2\dd s
			+ \Phi(t)^2
			\lesssim
			t^{-1}\int_0^{t}\Phi(s)^2\dd s,
		\end{equation}
		where we used the monotonicity of $\Phi$ in the last inequality. Indeed, 
		$(t/2)\Phi(t)^2\le\int_{t/2}^{t}\Phi^2\le \int_{0}^{t}\Phi^2$.
		For estimating the right-hand side, we distinguish two cases. 
		Under the assumptions of Part~ii) of Theorem~\ref{th:intro},
		we can directly apply condition~\eqref{Ass-Phi2} and conclude that $\|\theta(t)\|\lesssim \Phi(t)^2$.
		Otherwise, under the assumptions of Part~iii) of Theorem~\ref{th:intro},
		we have $\Phi\in L^2(\R^+)$, and we readily get
		$\|\theta(t)\|\lesssim t^{-1}$.
	\end{proof}
	
	\begin{Remark}
		\label{rem:ans2}
		Let us discuss further the conditions that we put on~$\Phi$. 
		Estimate~\eqref{eq:border} holds assuming only~\eqref{Ass-Phi0}-\eqref{Ass-Phi1}.
		Condition~\eqref{Ass-Phi2} is mainly useful to state Theorem~\ref{th:intro} in a more attractive way, but such an assumption could be simply dropped by replacing Conclusion~ii) with the conclusion
		\begin{equation}
			\label{coclus}
			\|u(t)-v(t)\|=O\bigl(\textstyle\frac1t \int_0^t \Phi(s)^2\dd s\bigr)
			\qquad
			\text{as $t\to+\infty$}.
		\end{equation}

		In the case of algebraic decays, i.e., when $\Phi(t)=C(1+t)^{-\alpha}$,  Wiegner's theorem, for $d=2$, matches Conclusion~ii) of Theorem~\ref{th:intro} when $0<\alpha<1/2$ and Conclusion iii) when $1/2<\alpha$.
		On the other hand, the assertion of Wiegner's theorem in the borderline case $\alpha=1/2$ is not covered by Theorem~\ref{th:intro}
		the way it is stated. However,  we can recover Wiegner's conclusion also when $\alpha=1/2$ directly from \eqref{coclus}, that provides the same logarithmic correction in the decay rate as 
		in~\eqref{concl:Wie} .
		
		The Cesàro type condition~\eqref{Ass-Phi1} plays a deeper role in our arguments. There is no evidence that such a condition is \emph{necessary} for the validity of Conclusion~i) of Theorem~\ref{th:intro}, but we do not expect \eqref{Ass-Phi1} can be relaxed considerably. For example, a weaker condition like $t\Phi(t)\to+\infty$, instead of \eqref{Ass-Phi1}, does not seem to be enough. Indeed, this larger class would encompass decreasing functions~$\Phi$
		``fluctuating'' between two positive decreasing profiles $\Phi_1$ and $\Phi_2$, 
		in the sense that $\Phi_1(t)\le \Phi(t)\le \Phi_2(t)$ for all nonnegative $t$, with $\Phi(t_k)=\Phi_1(t_k)$ and $\Phi(t_k')=\Phi_2(t_k')$
		for some sequences $(t_k)$ and $(t_k')$ diverging  to $+\infty$. We could also take $\Phi_1$ and $\Phi_2$ 
		such that $\Phi_1(t)=O(\Phi_2(t)^2)$ as $t\to+\infty$.
		In such a situation, one can hope for a bound of the form $\|u(t)-v(t)\|=O(\Phi_2(t)^2)$, for example if $\Phi_2$ satisfies 
		\eqref{Ass-Phi2};  but because of the time average that
		genuinely appears when one writes $u(t)-v(t)$ using Duhamel formula, it seems impossible to get the stronger bound
		$\|u(t)-v(t)\|=o(\Phi_1(t))$ and so the bound $\|u(t)-v(t)\|=o(\Phi(t))$, as $t\to+\infty$.
		For this reason, Conclusion i) of Theorem~\ref{th:intro}, does not seem to be valid in such a more general setting.
		An explicit construction of an initial data such that the solution of the heat equation $v$ decays with similar fluctuations in the $L^2$-norm, is provided in~\cite[Example~3.1]{BrandoSIMA}.
	\end{Remark}
	
	\begin{Remark}
		Let us point out that, once the bound $\|u(t)\| = O(\Phi(t))$ is available, the estimate for $ \|\theta(t) \|$ in the proof of Assertion~i) of Theorem \ref{th:intro} relies solely on this bound and on the decay of $\|\nabla u (t) \|$ without invoking the hypothesis on $\|v(t)\|$.  Hence the implication $\|u(t) \| = O(\Phi(t)) \implies \|\theta(t) \| = o(\Phi(t)) $ is self-contained.
		This fact is key to establish the following ``inverse form'' of Theorem \ref{th:intro}, in the same spirit as \cite{CPZ} (see also \cite{Ska}), by using the triangle inequality.
	\end{Remark}
	\begin{Corollary}\label{coro:inv}
		Let $u_0\in L^2_\sigma(\R^2)$ and $f$ satisfy~
		\eqref{assu0:f}-\eqref{assu1:f}. Let $u$ be the Leray solution
		of the Navier--Stokes equations \eqref{NS} with $d=2$, and $v$ be the solution of the heat equation~\eqref{stokes-flow}, starting from~$u_0$.
		Suppose that
		\[
		\|u(t)\|=O(\Phi(t)),\quad\text{as $t\to+\infty$},
		\]
		with $\Phi$ satisfying conditions~\eqref{Ass-Phi0}-\eqref{Ass-Phi1}. Then, $\|v(t)\|=O(\Phi(t))$ as $t\to+\infty$. 
		
	\end{Corollary}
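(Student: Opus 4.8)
The plan is to observe that the statement follows from the triangle inequality as soon as we know $\|u(t)-v(t)\|=o(\Phi(t))$, and that this latter estimate is already a consequence of the hypothesis $\|u(t)\|=O(\Phi(t))$ alone, with no a priori control on $\|v\|$ whatsoever. Indeed, a glance at the proof of Assertion~i) of Theorem~\ref{th:intro} shows that the bound on $\theta(t)=u(t)-v(t)$ obtained there rested on three ingredients only: the pointwise estimate $\|u(t)\|=O(\Phi(t))$, the gradient decay $\|\nabla u(t)\|=o(t^{-1/2})$ supplied by Lemma~\ref{gradient_decay_f} (which requires nothing beyond~\eqref{assu0:f}-\eqref{assu1:f}), and the structural properties~\eqref{doubling}, \eqref{lower_Phi}, \eqref{infi} of the profile $\Phi$. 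The hypothesis $\|v(t)\|=O(\Phi(t))$ entered Theorem~\ref{th:intro} only to \emph{produce} $\|u(t)\|=O(\Phi(t))$ through the bootstrap on $E(t)=\|u(t)\|/\Phi(t)$; here that conclusion is handed to us directly, so the bootstrap is bypassed and only the $\theta$-estimate remains.

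Concretely, I would first upgrade the asymptotic hypothesis to an estimate valid for all times: since $\Phi$ is monotonically decreasing and positive it is bounded below on every interval $[0,t_0]$, while~\eqref{SEI} gives $\|u(t)\|\le M:=\|u_0\|+C\int_0^\infty\|f(\tau)\|\,\dd\tau$; combining these with $\|u(t)\|=O(\Phi(t))$ yields a constant $C>0$ such that $\|u(t)\|\le C\Phi(t)$ for every $t\ge0$. Then, from the Duhamel identity $\theta(t)=-\int_0^t e^{(t-s)\Delta}\P\nabla\cdot(u\otimes u)(s)\,\dd s$ and the usual $L^1$–$L^2$ estimates for $e^{t\Delta}$ and $e^{t\Delta}\nabla$, one gets, exactly as in~\eqref{duapre},
\[
\|\theta(t)\|\le C_0\int_0^{t/2}(t-s)^{-1}\|u(s)\|^2\,\dd s+C_0\int_{t/2}^t(t-s)^{-1/2}\|u(s)\|\,\|\nabla u(s)\|\,\dd s .
\]
For the first integral, bounding $(t-s)^{-1}\le 2t^{-1}$ on $[0,t/2]$ and using $\|u(s)\|\le C\Phi(s)$, one is reduced to $t^{-1}\int_0^t\Phi(s)^2\,\dd s$, which is $\lesssim\Phi(t)\,\Phi(t\Phi(t))=o(\Phi(t))$ by the same splitting at $\lambda(t)=t\Phi(t)$ already carried out in the proof of Assertion~i) (here~\eqref{infi} guarantees $t\Phi(t)\to+\infty$, hence $\Phi(t\Phi(t))\to0$). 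For the second integral, $\|u(s)\|\le C\Phi(s)\le C\Phi(t/2)\lesssim\Phi(t)$ for $s\in[t/2,t]$ by~\eqref{doubling}, while $\int_{t/2}^t(t-s)^{-1/2}\|\nabla u(s)\|\,\dd s\to0$ by Lemma~\ref{gradient_decay_f}, so this term is $\lesssim\Phi(t)\,o(1)=o(\Phi(t))$. Altogether $\|\theta(t)\|=o(\Phi(t))$ as $t\to+\infty$, and then $\|v(t)\|\le\|u(t)\|+\|\theta(t)\|=O(\Phi(t))+o(\Phi(t))=O(\Phi(t))$.

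I do not anticipate a genuine obstacle: the heart of the matter is the (already noted) fact that the $\theta$-estimate is symmetric in the two hypotheses "$\|u\|=O(\Phi)$" and "$\|v\|=O(\Phi)$", so the whole content is a careful re-reading of the proof of Assertion~i). The only mild points requiring attention are the routine bookkeeping near $t=0$, handled by the energy bound $\|u\|\le M$ from~\eqref{SEI}, and a line-by-line verification that no step controlling $\theta$ in the proof of Assertion~i) used information about $v$ that is not re-derivable from $\|u\|=O(\Phi)$ together with Lemma~\ref{gradient_decay_f} and the properties of $\Phi$.
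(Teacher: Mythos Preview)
Your proposal is correct and follows exactly the paper's own approach: the Remark preceding the Corollary observes that in the proof of Assertion~i) the estimate $\|\theta(t)\|=o(\Phi(t))$ relied only on $\|u(t)\|=O(\Phi(t))$, and the Corollary is then obtained by the triangle inequality. You have simply written out these steps in more detail than the paper does.
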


	An immediate consequence is the following equivalence between the two-sided estimates:
	under the hypotheses of Corollary \ref{coro:inv}, 
	\[ \|u(t)\|\approx \Phi(t) \iff
	\|v(t)\|\approx \Phi(t).\]

	\appendix
	
	\section{An illustration of~Theorem~\ref{th:intro}}

	In this Appendix, we provide an example motivated by the discussion following the statement of~Theorem~\ref{th:intro}.

	\begin{Example}
		\label{Ex:1}
		Let $\chi\in C^\infty_c(\R^2)$ be a non-radial function,
		identically equal to~$1$ in $\{\xi\in\R^2\colon|\xi|\le 1/4\}$, supported in the disc $\{\xi\in\R^2\colon|\xi|\le 1/2\}$.
		Let  $\widehat\omega_0(\xi)=|\ln|\xi||^{-1}\chi(\xi)$.
		Let $\widehat u_0(\xi)=|\xi|^{-2}(i\xi_2,-i\xi_1)\widehat\omega_0(\xi)$.
		Then, $u_0\in L^2_\sigma(\R^2)$, and $\omega_0$ is the corresponding initial vorticity, namely $\omega_0=\mbox{curl\,} u_0$. We choose $f\equiv0$.
		The non-radiality of~$\chi$ is just useful to avoid the somewhat trivial case $\P(u\cdot \nabla \omega)\equiv0$, corresponding to the situation where
		the solution of the Navier--Stokes equations boils down to the solution of the heat equation. 
		Let us prove that
		\begin{equation}
			\label{exadec}
			\|v(t)\|\approx (\ln(e+t))^{-1/2}.
		\end{equation}

		We apply Schonbek's Fourier-Splitting method~\cite{Sch85}. Performing an energy estimate on the heat equation~\eqref{stokes-flow} yields 
		\[
		\frac12 \frac{\dd}{\dd t}\|v(t)\|^2 + \frac1t \int_{|\xi|\ge(\sqrt t)^{-1}} |\widehat v(\xi,t)|^2\dd\xi
		\le
		-\int_{|\xi|\le(\sqrt t)^{-1}}|\xi|^2|\widehat v(\xi,t)|^2\dd\xi.
		\]
		From this, we deduce 
		\begin{equation}
			\label{fsm}
			\frac12 \frac{\dd}{\dd t}\|v(t)\|^2 
			+\frac1t \|v(t)\|^2
			\le 
			\frac{1}{t}\int_{|\xi|\le(\sqrt t)^{-1}}|\widehat v(\xi,t)|^2\dd\xi.
		\end{equation}
		But for $t\ge16$,
		\[
		\begin{split}
			\int_{|\xi|\le(\sqrt t)^{-1}}|\widehat v(\xi,t)|^2\dd\xi
			&=  \int_{|\xi|\le(\sqrt t)^{-1}}e^{-2t|\xi|^2}|\xi|^{-2}|\ln|\xi||^{-2}			\dd\xi\\
			&\approx \int_{(\sqrt{t})^{-1}}^{+\infty} \rho^{-1}|\ln\rho|^{-2}\dd\rho\\
			&\approx(\ln t)^{-1}.
		\end{split}
		\]
		So in particular, for all $t\ge 2$, $\|v(t)\|\gtrsim (\ln t)^{-1/2}$.
		But multiplying~\eqref{fsm} by~$t$ and integrating over $[2,t]$,
		we get, for all $t\ge2$, 
		\[
		t\|v(t)\|^2\lesssim 
		\int_e^t (\ln s)^{-1}\dd s
		\lesssim t(\ln t)^{-1}.
		\]
		These calculations imply~\eqref{exadec} and the application of Theorem~\ref{th:intro}
		shows that $\|u(t)\|$ has the same large-time behavior.
	\end{Example}

\end{document}